\theoremstyle{plain}
\newtheorem*{thm*}{Theorem}
\newtheorem{thm}{Theorem}
\Crefname{thm}{Theorem}{Theorems}
\newtheorem*{lem*}{Lemma}
\Crefname{lem}{Lemma}{Lemmas}
\newtheorem*{claim*}{Claim}
\crefname{claim}{Claim}{Claims}
\Crefname{claim}{Claim}{Claims}
\newtheorem{prop}[thm]{Proposition}
\Crefname{prop}{Proposition}{Propositions}
\newtheorem{cor}[thm]{Corollary}
\Crefname{cor}{Corollary}{Corollaries}
\crefname{cor}{Corollary}{Corollaries}
\newtheorem{conj}[thm]{Conjecture}
\Crefname{conj}{Conjecture}{Conjectures}
\newtheorem{qn}[thm]{Question}
\Crefname{qn}{Question}{Questions}
\newtheorem{obs}[thm]{Observation}
\Crefname{obs}{Observation}{Observations}
\theoremstyle{definition}
\Crefname{prob}{Problem}{Problems}
\Crefname{defn}{Definition}{Definitions}
\theoremstyle{remark}
\renewenvironment{proof}[1][]{\begin{trivlist}
\item[\hspace{\labelsep}{\bf\noindent Proof#1.\/}] }{\qed\end{trivlist}}
\DeclareMathOperator{\modulu}{mod}
\newcommand{\F}{\mathcal{F}}
\newcommand{\G}{\mathcal{G}}
\newcommand{\MH}{\mathcal{H}}
\renewcommand{\P}{\mathcal{P}}
\renewcommand{\mod}{\;\modulu\,}
\newcommand{\remark}[1]{} % comment to get the remarks back
\newcommand{\remove}[1]{}
\begin{document}

\title{Almost partitioning the hypercube into copies of a graph}
\author{
    Vytautas Gruslys\thanks{
        Department of Pure Mathematics and Mathematical Statistics, 	
        University of Cambridge, 
        Wilberforce Road, 
        CB3\;0WB Cambridge, 
        UK;
        e-mail:
        \texttt{v.gruslys@dpmms.cam.ac.uk}.
    }
    \and
    Shoham Letzter\thanks{	
		ETH Institute for Theoretical Studies,
		ETH Zurich,
		8092 Zurich,
		Switzerland;
		e-mail:
		\texttt{shoham.letzter@math.ethz.ch}. 
	}
}

\maketitle

\begin{abstract}

    \setlength{\parskip}{\medskipamount}
    \setlength{\parindent}{0pt}
    \noindent
	Let $H$ be an induced subgraph of the hypercube $Q_k$, for some $k$.  We
	show that for some $c = c(H)$, the vertices of $Q_n$ can be partitioned
	into induced copies of $H$ and a remainder of at most $O(n^c)$
	vertices.  We also show that the error term cannot be replaced by
	anything smaller than $\log n$.

\end{abstract}

\section{introduction} \label{sec:intro} 
	Given graphs $G$ and $H$, an \emph{$H$-packing of $G$} is a collection of
	vertex-disjoint copies of $H$ in $G$. A \emph{perfect $H$-packing} (also
	known as an $H$-factor) is an $H$-packing that covers all the vertices of
	the ground graph $G$ (so, in order for $G$ to have a perfect $H$-packing,
	$|H|$ must divide $|G|$).  A natural question asks for conditions on $G$
	that imply the existence of an $H$-factor. For example, a well researched
	question asks for the smallest minimum degree that implies the existence
	of an $H$-factor.  If $H$ is an edge (and more generally if $H$ is a
	path), then, by Dirac's theorem \cite{Dirac}, if $G$ has $n$ vertices and
	minimum degree at least $n/2$ (and $|H|$ divides $|G|$), then $G$ has a
	perfect $H$-packing.  Corr\'adi and Hajnal \cite{Corradi-Hajnal} showed
	that $\delta(G) \ge 2n/3$ guarantees the existence of a perfect
	$K_3$-packing and Hajnal and
	Szemer\'edi \cite{Hajnal-Szemeredi} extended this result by
	showing that if $\delta(G) \ge (1 - 1/r)n$ then $G$ has a perfect
	$K_r$-packing. We remark that these conditions on $\delta(G)$ are best
	possible.

	After a series of papers by Alon and Yuster
	\cite{Alon-YusterI,Alon-YusterII} and by Koml\'os, S\'ark\"ozy
	and Szemer\'edi \cite{KSS}, Kuhn and Osthus \cite{Kuhn-Osthus} found the
	smallest minimum degree condition that guarantees the existence of an
	$H$-factor, up to an additive constant error term, and for all $H$.

	We consider a different problem, where instead of looking for
	$H$-packings in graphs of large minimum degree, we focus on $H$-packings
	of the hypercube $Q_n$. There are two obvious conditions for the
	existence of a perfect $H$-packing in $Q_n$: $H$ has to be a subgraph of
	$Q_n$; and the order of $H$ has to be a power of $2$. Gruslys
	\cite{Gruslys16III} showed that these two conditions are sufficient for
	large $n$, thus confirming a conjecture of Offner \cite{Offner}. In fact,
	he showed that if $H$ is an induced subgraph of $Q_k$ for some $k$ and
	$|H|$ is a power of $2$, then there is a perfect packing of $G$ into
	\emph{induced} copies of $H$.

	A similar problem concerns packings of the Boolean lattice $2^{[n]}$ into
	induced copies of a poset $P$. Note that here, if we drop the induced
	condition, we reduce to the case where $P$ is a chain, thus in the case
	of posets we only consider induced copies of $P$.
	Again, there are two obvious necessary conditions: $P$ must have a
	minimum and maximum elements; and the order of $P$ has to be a power of
	$2$. Lonc \cite{Lonc} conjectured that for large enough $n$, these
	conditions are also sufficient, and verified the conjecture for the case
	where $P$ is a chain. This conjecture was recently solved by Gruslys,
	Leader and Tomon \cite{Gruslys16II}. 
	
	\remove{
	We note that both this result and
	the aforementioned result of Gruslys \cite{Gruslys16III} use a general
	method for finding perfect packings of product spaces. This method was
	introduced by Gruslys, Leader and Tan \cite{Gruslys16I} (where the
	authors solved a similar tiling problems for $\mathbb{Z}^n$) and Gruslys,
	Leader and Tomon \cite{Gruslys16II} extended the method to be applicable
	for product spaces.
	}

	It is natural to ask what can be said when the divisibility
	condition does not hold. 
	Gruslys, Leader and Tomon \cite{Gruslys16II}  
	conjectured that if $P$ is a poset with a maximum and a minimum, then
	there is  a $P$-packing of
	$Q_n$ that covers all but at most $c$ elements, where $c = c(P)$ is a
	constant that depends on $P$. This conjecture was
	recently proved by Tomon \cite{Tomon16II}.

	\remark{should I mention the special case of chains? Griggs conjectured
	and Lonc solved it}

	In light of this result, it is natural to ask if a
	similar phenomenon holds in the case of $H$-packings of the hypercube
	$Q_n$. Namely, if $H$ is a subgraph of $Q_k$ for some $k$, how large an
	$H$-packing of $Q_n$ can we find? As our first main result, we show that
	if $H$ is a subgraph of $Q_k$ then there is an $H$-packing of $Q_n$ that
	covers all but at most $O(n^c)$ vertices, where $c = c(H)$.

	\begin{restatable}{thm}{thmAlmostTiling} \label{thm:almost-tiling}
		Let $H$ be an induced subgraph of $Q_k$ for some $k$. Then there
		exists a packing of $G$ by induced copies of $H$, such that at most
		$O(n^c)$ vertices remain uncovered, where $c = c(H)$.	
	\end{restatable}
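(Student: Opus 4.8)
The plan is to bootstrap from Gruslys's perfect packing theorem \cite{Gruslys16III}, which guarantees that any induced subgraph $F$ of a cube with $|F|$ a power of two admits a perfect packing of $Q_n$ into induced copies of $F$ for all large $n$. Writing $h = |H|$, the only interesting case is when $h$ is not a power of two, say $h = 2^s b$ with $b \ge 3$ odd; here \emph{no} region of power-of-two size can be perfectly packed by $H$, so some leftover is unavoidable and the entire difficulty is to keep it small. My first step is to assemble copies of $H$ into a clean macro-tile: I would construct integers $p > r$ and an induced subgraph $T \subseteq Q_K$ with $|T| = 2^p$ that decomposes into induced copies of $H$ together with a single induced subcube $Q_r$. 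Such a decomposition requires $2^p = h t + 2^r$ for some integer $t \ge 0$, i.e. $2^r(2^{p-r} - 1)$ divisible by $2^s b$; this is arranged by taking $r \ge s$ and $p - r$ a multiple of the multiplicative order of $2$ modulo $b$. Thus $T$ is ``perfect up to one subcube defect,'' and the defect is deliberately chosen to be a subcube, since subcubes have power-of-two size and so are themselves amenable to perfect packing.

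Applying Gruslys's theorem to $T$ then gives a packing of $Q_n$ by induced copies of $H$ whose uncovered set is a disjoint union of $2^{n-p}$ copies of $Q_r$, one per macro-tile. On its own this is far too much leftover, so the heart of the argument is a recursive absorption that reduces the number of surviving defects scale by scale. The idea is to control the placement of the macro-tiles so that the $2^{n-p}$ defect subcubes are themselves organised as an induced copy of a product $Q_r \times Q_{n-p}$ (or a suitably controlled approximation of one). One can then treat this defect region as a fresh, lower-scale instance of the same problem and re-pack all but a small part of it by copies of $H$ using a second macro-tile. Iterating this over $O(n)$ scales, with only a polynomially bounded amount of genuine leftover shed at each scale, yields a total remainder that is $O(n^c)$; that each scale is forced to contribute a defect is consistent with the claimed $\log n$ lower bound on the error.

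The main obstacle is precisely this alignment. Gruslys's theorem is a black box: it guarantees a perfect packing by $T$ but says nothing about \emph{where} the defects land, so a priori the $2^{n-p}$ copies of $Q_r$ could be scattered in a way that admits no efficient re-packing. To circumvent this I would avoid applying the black box to all of $Q_n$ at once, and instead build the packing on an explicit product decomposition $Q_n = Q_{n-m} \times Q_m$, packing the $Q_m$-fibres by the macro-tile in a translation-invariant fashion so that the defects line up into a genuine $Q_r \times Q_{n-p}$ region, invoking Gruslys's theorem only where its hypotheses hold. Controlling the interaction across the product coordinates --- ensuring that copies of $H$ straddling the two factors can be used to \emph{merge} adjacent defects rather than create new ones --- is the delicate point, and I expect the bulk of the work, together with the bookkeeping that converts the per-scale reduction into the final $O(n^c)$ bound, to concentrate here.
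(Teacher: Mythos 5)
There is a genuine gap, and it sits exactly where you place ``the bulk of the work'': the recursive absorption of the defect subcubes is not an implementation detail but the entire difficulty of the theorem, and your proposal gives no mechanism for it. After one application of Gruslys's theorem to the macro-tile $T$ you have $2^{n-p}$ defect copies of $Q_r$ --- a constant \emph{fraction} of all vertices --- and everything hinges on (i) forcing these defects to align into a single induced subcube and (ii) bounding the amount ``shed'' at each of the $O(n)$ scales by a polynomial. For (i), Gruslys's theorem places its tiles using coordinate permutations as well as translations and gives no positional control; your fallback of packing fibres ``in a translation-invariant fashion'' would require $T$ to tile the group $\mathbb{Z}_2^m$ by translates alone, which is a far stronger property that neither holds in general nor follows from anything cited. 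For (ii), no source of the per-scale leftover is identified and no reason is given why it should be polynomial rather than exponential; note that if the alignment and merging worked perfectly the recursion would terminate with $O(1)$ uncovered vertices, contradicting the paper's own lower bound that every $(P_3)^3$-packing of $Q_n$ misses at least $\log n$ vertices, so some unavoidable loss must occur at (many of) the scales, and quantifying it is precisely the open part of your argument.

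The paper sidesteps alignment entirely by choosing a different intermediate object: instead of a cube-with-a-defect it uses powers of paths. It first shows that $H$ \emph{perfectly} partitions $(P_{2|H|})^m$ for some $m$ (adapting, not black-boxing, the $l$-partition and $(1 \bmod l)$-partition machinery of Gruslys--Leader--Tomon), and separately shows that $Q_n$ can be packed by induced copies of $(P_{2|H|})^m$ leaving only $O(n^{m-1})$ vertices, via Hamilton paths of subcubes (chopping a Hamilton cycle of $Q_{\phi(l)}$ into paths of odd length $l$ leaves exactly one vertex over, and taking $t$-fold products gives the polynomial error directly), together with Ramras's antipodal-path decomposition to upgrade to induced copies. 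The polynomial error there is explicit and requires no recursion. If you want to salvage your approach you would need to either solve the defect-alignment problem (which is essentially a new tiling theorem with positional control) or find an absorption structure with provably polynomial loss; as written, the proposal restates the problem rather than solving it.
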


	It is natural to wonder if the number of uncovered vertices can be
	reduced to be at most $c(H)$. Perhaps surprisingly, it turns out that this is not
	always the case. As our second main result, we show that a
	$(P_3)^3$-packing of $Q_n$ misses at least $\log n$ vertices ($P_3$ is
	the path on three vertices).

	\begin{restatable}{thm}{thmMissingVs} \label{thm:missing-vs}
		In every $(P_3)^3$-packing of $Q_n$, at least $\log n$ points
		are uncovered.
	\end{restatable}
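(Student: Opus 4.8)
The first step is to pin down the structure of induced copies of $(P_3)^3$ in $Q_n$. The plan is to use that $(P_3)^3$ has a unique vertex of degree $6$, namely its centre $(1,1,1)$; in any induced copy this must map to a vertex $b$ whose six copy-neighbours are six coordinate flips $b\oplus e_i$. I would then recover the three factors intrinsically: two of these neighbours lie in a common $P_3$-factor precisely when $b$ is their only common neighbour inside the copy, and this pairs the six flip-directions into three disjoint pairs $\{i_t,i_t'\}$. It follows that every induced copy is axis-aligned, equal to $\{\,b\oplus\delta_1\oplus\delta_2\oplus\delta_3 : \delta_t\in\{0,e_{i_t},e_{i_t'}\}\,\}$ for some centre $b$ and some partition of six coordinates into three pairs. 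In particular each factor shifts the Hamming weight by an offset in $\{-1,0,+1\}$, so a single copy meets at most seven consecutive layers of $Q_n$.

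With this in hand I would introduce a weight detecting the obstruction. Let $\omega = e^{2\pi i/3}$ and weight each vertex $v$ by $\omega^{|v|}$, where $|v|$ is the Hamming weight. Summing over a single factor gives $1+2\omega=\sqrt{-3}$, $1+2\omega^2=-\sqrt{-3}$, or $1+\omega+\omega^2=0$, according to whether $b$ agrees on that pair at $0,0$, at $1,1$, or disagrees. Hence the contribution of a whole copy, namely $\omega^{|b|}\prod_t(\text{factor sum})$, is divisible by $\lambda^3$ in the Eisenstein integers $\mathbb{Z}[\omega]$, where $\lambda=\sqrt{-3}$ is the ramified prime above $3$. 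On the other hand the whole cube contributes $\sum_v\omega^{|v|}=(1+\omega)^n=(-\omega^2)^n$, a unit. Writing $U$ for the uncovered set and summing this identity over all copies together with $U$, I get $\sum_{u\in U}\omega^{|u|}\equiv(-\omega^2)^n\pmod{\lambda^3}$, a unit; so $U$ is nonempty and its layer-counts modulo $3$ are constrained. This already reproves that a perfect packing is impossible, but by itself it yields only a constant lower bound.

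The main work, and the step I expect to be hardest, is to amplify this single congruence into the bound $|U|\ge\log n$. The plan is to run the argument on a nested family of roughly $\log_2 n$ subcubes rather than on $Q_n$ alone. Concretely I would aim for a recursion $g(n)\ge g(\lfloor n/2\rfloor)+1$, where $g(n)$ denotes the minimum number of uncovered vertices over all packings of $Q_n$: fixing about half of the coordinates to reach a subcube $Q_{\lfloor n/2\rfloor}$, one discards the copies that straddle the boundary, uses the weight above to force an uncovered vertex attributable to this scale, and argues that the defects produced at the $\log_2 n$ distinct scales are carried by distinct vertices. An alternative route is to compare the full layer-generating function $\sum_k a_k X^k$ of $U$ with $(1+X)^n$: since each copy's layer profile has width at most seven while the binomial coefficients vary rapidly near the extreme layers, the residual masses $a_k$ cannot all vanish across many scales.

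The genuine obstacle in either approach is bookkeeping the copies that cross between scales: a copy spans six coordinates and can sit astride any subcube boundary, so the crux is to localize the $\lambda^3$-obstruction to each individual scale without having these straddling copies cancel the defect or cause double counting. I would therefore expect most of the effort to go into choosing the coordinate splittings so that straddling copies are controlled, and into verifying that a genuinely new uncovered vertex is produced at each of the $\log n$ levels.
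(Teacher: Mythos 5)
Your first two steps are sound: the structural description of induced copies as axis-aligned sets $\{b\oplus\delta_1\oplus\delta_2\oplus\delta_3\}$ over three disjoint coordinate pairs is correct, and the Eisenstein-weight computation does show that every packing leaves at least one vertex uncovered. But that is only a constant lower bound, and the amplification to $\log n$ --- which is the entire content of the theorem --- is not carried out. The route you sketch (a recursion $g(n)\ge g(\lfloor n/2\rfloor)+1$ over nested half-dimensional subcubes) faces exactly the obstacle you name, and I do not see how to overcome it: restricting a packing of $Q_n$ to a subcube $Q_{\lfloor n/2\rfloor}$ does not yield a packing of that subcube, since the straddling copies intersect it in proper subsets of $(P_3)^3$, so $g(\lfloor n/2\rfloor)$ cannot be invoked; and even if each scale produced a defect, you would still need the $\log n$ defects to be witnessed by distinct vertices, which is not addressed. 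As written, the proposal proves $|U|\ge 1$ and leaves the theorem open.

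The missing idea is to localise not to half-dimensional subcubes but to \emph{codimension-two} subcubes, and to extract the $\log n$ from a separating-family argument rather than a recursion. The paper shows (via an inductive analysis of intersections with codimension-one subcubes) that every copy of $(P_3)^3$ meets every codimension-two subcube in a set of size divisible by $3$; since $2^{n-2}\not\equiv 0 \pmod 3$, every such subcube contains an uncovered vertex. Applying this to the subcubes $\{x: x_i=1,\ x_j=0\}$ for all ordered pairs $i\ne j$ shows that the uncovered set, viewed as a family of subsets of $[n]$, separates every ordered pair, and a separating family must have size at least $\log_2 n$. Notably, your structure theorem makes the key divisibility immediate: in the intersection of $\{b\oplus\delta_1\oplus\delta_2\oplus\delta_3\}$ with $\{x_i=1,\ x_j=0\}$ the count factors over the three coordinate pairs, and since the two constrained coordinates $i,j$ touch at most two of the three pairs, at least one factor contributes a full $3$. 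So you are one observation (codimension-two subcubes plus separating families) away from a complete proof; the Eisenstein weights and the halving recursion can be discarded.
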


	\subsection{Notation}

		We denote the path on $l$ vertices by $P_l$. 
		When we say that $G$ can be partitioned into copies of $H$, we mean
		that there exists a perfect $H$-packing of $G$.
		For graphs $G_1$ and $G_2$, we denote by $G_1 \times
		G_2$ the Cartesian product of $G_1$ and $G_2$, which has vertex set
		$V(G_1) \times V(G_2)$ and $((u_1, v_1), (u_2, v_2))$
		is an edge iff $u_1 = v_1$ and $u_2 v_2 \in E(G_2)$ or $u_2 = v_2$ and
		$u_1 v_1 \in E(G_1)$. The $n$-th power $G^n$ of $G$ is defined to be $G
		\times \ldots \times G$ (where $G$ appears $n$ times).

	\subsection{Structure of the paper}

		This paper consists of three parts.  In the first part (see
		\Cref{sec:one-mod-l}) we prove that if $H$ is an induced subgraph of
		$Q_k$ for some $k$, then for sufficiently large $n$, there is a
		perfect packing of $(P_{2|H|})^n$ into induced copies of $H$ (see
		\Cref{thm:tiling-powers-paths}).  In the second part
		(\Cref{sec:tiling-with-powers-paths}), we prove that there is a
		packing of $Q_n$ by induced copies of $(P_l)^t$ which leaves at most
		$O(n^{t-1})$ vertices uncovered.  These two parts easily combine to
		form a
		proof of \Cref{thm:almost-tiling}.  Finally, in the third part
		(\Cref{sec:missing-vs}) we prove \Cref{thm:missing-vs}, thus showing
		that the error term $O(n^{t-1})$ cannot be replaced by something
		smaller than $\log n$.

		Before proceeding to the proofs, we give an overview of them in
		\Cref{sec:overview}. We finish the paper with concluding remarks 
		and open problems in \Cref{sec:conclusion}.

\section{Overview of the proofs} \label{sec:overview}

	In this section we give an overview of the proofs in this paper.
	
	\subsection{Partitioning $(P_{2l})^n$}
		Our first aim in this paper is to prove
		\Cref{thm:tiling-powers-paths}.

		\begin{restatable}{thm}{thmTilingPowersPaths}
			\label{thm:tiling-powers-paths}
			Let $H$ be an induced subgraph of $Q_k$ for some $k$. Then
			$(P_{2|H|})^n$ can be partitioned into induced copies of $H$,
			whenever $n$ is sufficiently large.
		\end{restatable}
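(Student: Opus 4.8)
The plan is to separate the statement into two parts: an easy monotonicity step that makes the phrase ``for all sufficiently large $n$'' essentially free, and a single base case that carries all the difficulty. Throughout set $m = |H|$, so that the target is $(P_{2m})^n$, and fix $k$ with $H$ an induced subgraph of $Q_k$; I will view $Q_k$ as the induced subgraph of $(P_{2m})^k$ spanned by $\{0,1\}^k$, so that $H$ is realised as an induced subgraph on some set $S \subseteq \{0,1\}^k$ with $|S| = m$.

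The first ingredient is a \emph{fibre lemma}: for any graph $A$, the product $H \times A$ is partitioned into induced copies of $H$ by the fibres $H \times \{a\}$, $a \in V(A)$, and --- crucially --- if $H'$ is an induced copy of $H$ inside a graph $G$, then each $H' \times \{a\}$ is an induced copy of $H$ inside $G \times A$. Applying this with $A = P_{2m}$ to every tile of a partition of $(P_{2m})^n$ turns a partition of $(P_{2m})^n$ into induced copies of $H$ into one of $(P_{2m})^{n+1}$. Hence tileability of $(P_{2m})^n$ is monotone in $n$, and it suffices to exhibit a \emph{single} $n_0 = n_0(H)$ for which $(P_{2m})^{n_0}$ admits such a partition; monotonicity then upgrades this to all $n \ge n_0$, which is exactly the claim. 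Divisibility is never an obstruction here, since $m \mid (2m)^n$ for every $n \ge 1$.

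All the work therefore goes into the base case, which I expect to be the main obstacle. The natural first move is to split each path coordinate $\{0,1,\dots,2m-1\}$ into the $m$ consecutive pairs $\{2i, 2i+1\}$; this exhibits $(P_{2m})^{n_0}$ as a disjoint union of $m^{n_0}$ induced subcubes, each a copy of $Q_{n_0}$, inside each of which a translate of $S$ gives an induced copy of $H$. The obstruction is then immediate: a single $Q_{n_0}$ has $2^{n_0}$ vertices, which is a multiple of $m$ only when $m$ is a power of two, so no tiling can respect this pair-partition and the tiles must be allowed to straddle the boundaries between consecutive pairs. The factor of $2$ in the length $2m = 2|H|$ is precisely what provides the room to do so: it supplies a ``$Q$-direction'' within each coordinate, while the remaining factor $m$ supplies the divisibility that makes the number of copies, $2^{n_0} m^{n_0 - 1}$, an integer.

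To realise this I would construct the base tiling by a local, periodic scheme, combining the cheap fibre tilings inside the subcubes with a controlled family of ``straddling'' gadgets that transport, across pair-boundaries, the portion of each subcube left uncovered by its single local copy of $H$. The periodicity of such a scheme forces a congruence condition on the lengths involved, and arranging for $n_0$ (and the relevant period) to lie in the correct residue class modulo $l = 2m$ is where the argument of \Cref{sec:one-mod-l} does its real work. The hard part is designing these gadgets so that they simultaneously (i) consist of induced copies of $H$ only, (ii) fit inside the path without wrap-around at the extreme coordinates $0$ and $2m-1$, and (iii) patch together globally into a partition of the whole box; proving that such a consistent, boundary-respecting, congruence-compatible family exists for \emph{every} induced subgraph $H$ of a cube is the crux, and everything else reduces to it through the fibre lemma.
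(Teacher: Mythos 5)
Your reduction to a single base case is fine: the fibre argument correctly shows that if $(P_{2m})^{n_0}$ can be partitioned into induced copies of $H$ then so can $(P_{2m})^{n}$ for all $n \ge n_0$, and the paper makes the same (implicit) reduction when it notes that it suffices to partition $(P_{2|H|})^n$ for \emph{some} $n$. But everything after that is a description of a difficulty rather than a proof. You correctly identify that tiles must straddle the boundaries between the pairs $\{2i, 2i+1\}$ of each path coordinate, and you then state that designing ``straddling gadgets'' which are induced copies of $H$, avoid wrap-around at the ends of the path, and patch together into a global partition ``is the crux'' --- and you stop there. That crux is the entire content of the theorem: no construction of the gadgets is given, and it is far from clear that a direct local, periodic construction of this kind exists for an arbitrary induced subgraph $H$ of a cube. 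So the proposal has a genuine gap: the base case is asserted, not proved.

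For comparison, the paper does not attempt a direct construction. It invokes the theorem of Gruslys, Leader and Tomon (\Cref{thm:general-tiling}), which converts two much weaker \emph{modular} covering statements into a perfect partition of a high power: it suffices to exhibit, for a single $l$, an $l$-partition and a $(1 \mod l)$-partition of $A = (P_{2l})^k$ into copies of $H$, i.e.\ multiset covers in which every vertex is covered exactly $l$ times, respectively $1 \mod l$ times. The $l$-partition is easy (decompose $(P_{2l})^k$ into copies of $Q_k$ and average over all shifts of one embedded copy of $H$, \Cref{obs:l-tiling}); the $(1 \mod l)$-partition (\Cref{thm:one-mod-l-tiling}) is built by induction on $k$, splitting $H$ along its last cube coordinate into $H_-$ and $H_+$ and telescoping a system of weights along the last path coordinate. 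Passing to these modular relaxations is precisely what makes the problem tractable for every $H$; to complete your argument you would need either this machinery or a genuinely new construction of your gadgets.
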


		Our proof follows the footsteps of Gruslys \cite{Gruslys16III} who
		proved that if $H$ is an induced subgraph of a hypercube whose order is a
		power of $2$, then for large $n$ there is a perfect packing of $Q_n$
		into induced copies of $H$.
		 
		An important tool in the proof of \Cref{thm:tiling-powers-paths} is a
		result of Gruslys, Leader and Tomon \cite{Gruslys16II} (introduced by
		Gruslys, Leader and Tan \cite{Gruslys16I} for tiling of
		$\mathbb{Z}^n$) which gives a
		a general method for proving the existence of perfect packings of a
		product space $A^n$ into copies of a subset $S$ of $A$. Given a
		subset $S$ of $A$, a collection of copies of $S$ in $A^n$ (which may
		contain a certain copy several times) is called an $l$-partition ($(r
		\mod l)$-partition) if every vertex in $A^n$ is covered by exactly
		$l$ ($(r \mod l)$) copies of $S$.  We note that a $1$-partition is
		simply a perfect packing. Trivially, the existence of a perfect
		packing implies the existence of an $l$-partition and a $(1 \mod
		l)$-partition. Remarkably, the aforementioned result of Gruslys,
		Leader and Tomon shows that, roughly speaking, the opposite is true.
		Namely, they showed that if there exists $l$ for which $A^m$ admits
		an $l$-partition and a $(1 \mod l)$-partition into copies of $S$,
		then, for large $n$, $A_n$ admits a perfect packing into copies of
		$S$. The precise statement of this result is given in
		\Cref{thm:general-tiling}.

		The existence of an $|H|$-partition of $(P_{2|H|})^n$ into induced
		copies of $H$ is a simple observation (see \Cref{obs:l-tiling}).
		The existence of a $(1 \mod |H|)$-partition of $(P_{2|H|})^n$
		into copies of $H$ is more difficult to prove, but it is quite
		straightforward to adapt the methods of Gruslys \cite{Gruslys16III}
		to work in our setting. These two facts, together the aforementioned
		result \cite{Gruslys16II}, form the proof of
		\Cref{thm:tiling-powers-paths}.

	\subsection{Almost partitioning $Q_n$ into powers of a path}

		Our second aim is to prove \Cref{thm:almost-tiling-induced-paths}.
		\begin{restatable}{thm}{thmAlmostTilingInducedPaths}
			\label{thm:almost-tiling-induced-paths}
			For any $l$ and $t$, there is a packing of $Q_n$ into induced
			copies of $(P_l)^t$, for which at most
			$O(n^{t-1})$ vertices are uncovered. 
		\end{restatable}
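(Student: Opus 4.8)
The plan is to prove the statement by induction on $t$, with the trivial base case $t=0$: since $(P_l)^0$ is a single vertex, $Q_n$ is partitioned by copies of $(P_l)^0$ with no uncovered vertices. Throughout I would use the elementary fact that if $A \subseteq V(Q_a)$ induces a copy of $S$ and $B \subseteq V(Q_b)$ induces a copy of $S'$, then $A \times B$ induces a copy of $S \times S'$ in $Q_a \times Q_b = Q_{a+b}$; in particular, the product of an induced $P_l$ with an induced $(P_l)^{t-1}$ is an induced $(P_l)^t$.

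The engine of the induction is a constant-sized gadget, which I would isolate as a separate lemma: \emph{there exist constants $m = m(l)$ and $d = d(l)$ with $0 \le d < m$ such that $Q_m$, with one $d$-dimensional subcube (face) $F$ deleted, can be partitioned into induced copies of $P_l$.} The reason for deleting a face $F \cong Q_d$ rather than an arbitrary set of the correct size is that, after taking a product with a large cube, the deleted part $F \times Q_{n-m}$ is again a single subcube $Q_{n-m+d}$; this is exactly what will let the recursion close with multiplicative constant one. The necessary divisibility $l \mid 2^m - 2^d = 2^d(2^{m-d}-1)$ is arranged by taking $d$ to be the $2$-adic valuation of $l$ and $m-d$ a suitably large multiple of the multiplicative order of $2$ modulo the odd part of $l$ (large enough that an induced $P_l$ fits, i.e.\ $m \ge l-1$). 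I expect the construction of this partition to be the main obstacle: divisibility is only a necessary condition, so one must exhibit the paths explicitly, or build them by a secondary induction on $m$. This is the one genuinely combinatorial step.

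Granting the gadget, the inductive step runs as follows. Write $Q_n = Q_m \times Q_{n-m}$ and fix the partition $Q_m = F \sqcup \bigsqcup_\alpha P_\alpha$ from the lemma, each $P_\alpha \cong P_l$. By the inductive hypothesis on $t$, partition $Q_{n-m} = R \sqcup \bigsqcup_\beta G_\beta$ with $G_\beta \cong (P_l)^{t-1}$ and $|R| = O\big((n-m)^{t-2}\big)$. Then
\[
    V(Q_n) \;=\; \Big(\bigsqcup_{\alpha,\beta} P_\alpha \times G_\beta\Big) \;\sqcup\; \big(F \times Q_{n-m}\big) \;\sqcup\; \Big(\big(\textstyle\bigsqcup_\alpha P_\alpha\big) \times R\Big).
\]
Each $P_\alpha \times G_\beta$ is an induced copy of $(P_l)^t$, so the first part is perfectly partitioned. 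The middle part $F \times Q_{n-m}$ is an induced subcube $Q_{n-m+d}$, which I partition by applying the theorem recursively in dimension $n-(m-d) < n$. The last part has size $(2^m - 2^d)\,|R| = O\big((n-m)^{t-2}\big)$, and I simply leave it uncovered.

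Writing $f_t(n)$ for the number of vertices left uncovered by this scheme and setting $\Delta = m - d \ge 1$, the construction yields the recursion
\[
    f_t(n) \;\le\; f_t(n-\Delta) \;+\; O\big(n^{t-2}\big),
\]
together with $f_0 \equiv 0$. The crucial feature is that the subcube $F \times Q_{n-m}$ enters with coefficient exactly one, so there is no exponential blow-up across the $\Theta(n)$ levels of the recursion; unrolling gives $f_t(n) = O(n^{t-1})$, as required. Apart from this bookkeeping, the only place where the specific structure of $P_l$ and of the hypercube is used is the constant-sized gadget lemma, which I therefore view as the heart of the argument.
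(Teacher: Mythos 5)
Your reduction framework is sound: granting the gadget lemma, the product decomposition, the recursion $f_t(n)\le f_t(n-\Delta)+O(n^{t-2})$, and the conclusion $f_t(n)=O(n^{t-1})$ all check out (induced-ness is preserved under products and under restriction to subcubes, and the coefficient-one recursion on the subcube $F\times Q_{n-m}$ avoids any blow-up). But the proposal has a genuine gap exactly where you flag it: the gadget lemma --- that $Q_m$ minus a $d$-dimensional face can be partitioned into \emph{induced} copies of $P_l$ --- is asserted, not proved. You verify only the divisibility condition $l\mid 2^d(2^{m-d}-1)$, and divisibility says nothing about realizability here. This lemma is not a routine verification: it is essentially equivalent in difficulty to the whole $t=1$ case of the theorem, and it is precisely the part to which the paper devotes its machinery. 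Getting a partition into \emph{non-induced} paths is easy (Hamiltonicity of $Q_m$ plus Fermat--Euler), but upgrading to induced paths is the crux; the paper does it by combining Ramras's theorem (when $n+1$ is a power of $2$, $Q_n$ partitions into induced copies of $P_{n+1}$) with the observation that $H\times P_{l-1}$ partitions into induced $P_l$'s whenever $H$ has a Hamilton path on $l$ vertices, together with some careful arithmetic to make the path lengths match up. Without an argument of comparable substance, your ``one genuinely combinatorial step'' is an unproved conjecture sitting at the heart of the proof.

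For comparison: the paper does not induct on $t$ at all. It first produces a non-induced $(P_l)^t$-packing of $Q_n$ missing $O(n^{t-1})$ vertices directly (Hamilton cycles in $Q_m$, products, and a $2$-adic induction on $l$), and then converts it wholesale into an induced packing by taking a further product with copies of $Q_{2^m-1}$ pre-partitioned into induced paths via Ramras. Your architecture, if the gadget were supplied, would arguably be cleaner --- in particular it isolates all the hypercube-specific work into one constant-sized statement, and it needs the leftover at the gadget level to be a \emph{face} (a stronger requirement than the paper ever needs, since the paper never recurses on the uncovered set). Note also that for even $l$ the face-deletion requirement cannot be reduced to the vertex-deletion case by the naive product trick ($l\nmid 2^{m-d}-1$), so the even case of your gadget needs its own construction. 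I would encourage you to either prove the gadget lemma (possibly by adapting Ramras's antipodal-path partition so that the uncovered remainder is a single face) or to abandon the face requirement and follow a route closer to the paper's.
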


		The fact that $Q_n$ is Hamiltonian shows that there is a
		$P_l$-packing of $Q_n$ missing fewer than $l$ vertices.
		In fact, if $l$ divides $2^n - 1$, then exactly one vertex remains
		uncovered. This observation allows us to prove the existence of a
		$(P_l)^t$-packing of $Q_n$ with at most $O(n^{t-1})$ uncovered
		vertices, whenever $l$ is odd (see \Cref{obs:tiling-odd-paths}).
		It is then not hard to conclude that the same holds for all $l$ (see
		\Cref{cor:tiling-even-paths}) using the observation that $(P_{2l})^t$ is
		a subgraph of $(P_l)^t \times Q_t$.

		Note that this does not imply \Cref{thm:almost-tiling-induced-paths},
		since we require that the copies of $(P_l)^t$ are induced.
		We notice that if
		$H$ is a graph on $l$ vertices with a Hamilton path, then $H \times
		P_{l-1}$ has a perfect packing into induced $P_l$'s (see
		\Cref{obs:hamilton-times-induced-path}). This fact, with a little
		more work, allows us to use the packing of $Q_n$ into (not necessarily
		induced) copies of $(P_{l'})^t$ to obtain a packing into induced copies
		of $(P_l)^t$ (where $l'$ is suitably chosen).

		We note that \Cref{thm:almost-tiling} follows from
		\Cref{thm:tiling-powers-paths,thm:almost-tiling-induced-paths}.

		\begin{proof}[ of \Cref{thm:almost-tiling}]
			Let $H$ be a subgraph of $Q_k$. Then by
			\Cref{thm:tiling-powers-paths}, there exists $m$ for which there
			is a perfect packing of 
			$(P_{2|H|})^m$ into induced copies of $H$.
			By \Cref{thm:almost-tiling-induced-paths}, 
			there is a packing of $Q_n$ into induced copies of $(P_{2|H|})^m$,
			such that at most 
			$O(n^{m-1})$ vertices  are uncovered.
			Hence there exists a packing of $Q_n$ into induced copies of $H$
			with at most $O(n^{m-1})$ uncovered vertices (note that $m$
			depends only on $H$).
		\end{proof}

	\subsection{Lower bound on the number of uncovered vertices}
		
		Our final aim is to prove \Cref{thm:missing-vs}.
		\thmMissingVs*

		We use the properties of $Q_n$ and of $(P_3)^3$ to conclude that the
		size of the intersection of any co-dimension-$2$ subcube of $Q_n$
		with any copy of $(P_3)^3$ is divisible by $3$. In fact, we deduce
		this from a similar statement for $(P_3)^t$ (see
		\Cref{prop:intersect-codim-one}).  We conclude that the set of
		uncovered vertices in a $(P_3)^3$-packing of $Q_n$ forms a
		\emph{separating family} for $[n]$, implying that it has size at
		least $\log n$.

\section{Perfect $H$-packings of $(P_{2|H|})^n$} \label{sec:one-mod-l}

	Our main aim in this section is to prove
	\Cref{thm:tiling-powers-paths}.
	
	\thmTilingPowersPaths*

	Recall that a result of Gruslys, Leader and Tomon \cite{Gruslys16II}
	implies that it suffices to find $l$- and $(1 \mod l)$-partitions into
	copies of $H$. Before stating their result precisely, we introduce
	some notation. 

	Let $A$ be a set. We identify $A^n
	\times A^m$ with $A^{n + m}$ (whenever $m$ and $n$ are positive
	integers). Thus, for any $x \in A^n$ and $y \in A^m$, we treat $(x, y)$
	as an element of $A^{n + m}$.
	Given a set $X$ in $A^n$, and a permutation $\pi : [n] \rightarrow
	[n]$, we define $\pi(X)$ to be the image of $X$ under the permutation
	of the coordinates according to $\pi$.
	In other words, $\pi(X) = \left\{ \left(x_{\pi(1)}, \ldots, x_{\pi(n)}
	\right) : (x_1, \ldots, x_n) \in X \right\}$.
	Finally, given sets $X$ in $A^m$ and $Y$ in $A^n$ where $m \le n$, we
	say that $Y$ is
	a \emph{copy} of $X$ if $Y = \pi(X \times \{y\})$ for some $y \in A^{n -
	m}$.
	\remark{we are using a different definition of a copy in the paper}

	\begin{thm} [Gruslys, Leader, Tomon \cite{Gruslys16II}]
		\label{thm:general-tiling}
		Let $\F$ be a family of subsets of a finite set $A$. If there
		exists $l$ for which $\F$ contains an $l$-partition and a $(1
		\mod l)$-partition of $S$, then there exists $n$ for which $S^n$
		admits a partition into copies of elements in $\F$.
	\end{thm}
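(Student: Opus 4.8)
The plan is to pass to the language of coverage functions. Given a multiset $\mathcal{C}$ of copies of $S$ in $A^n$, let its \emph{coverage function} $f_{\mathcal{C}} : A^n \to \mathbb{Z}_{\ge 0}$ record, at each point, the number of copies in $\mathcal{C}$ containing it. In this language an $l$-partition is a $\mathcal{C}$ with $f_{\mathcal{C}} \equiv l$, a $(1 \bmod l)$-partition is one with $f_{\mathcal{C}}(x) \equiv 1 \pmod l$ for every $x$, and a perfect packing (the object we must build) is one with $f_{\mathcal{C}} \equiv 1$. Thus the goal is exactly to produce, for some $n$, a multiset of copies of $S$ in $A^n$ whose coverage function is identically $1$. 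Call a function \emph{realizable} in $A^n$ if it is the coverage function of some such multiset. (For a family of shapes the whole argument runs verbatim with copies of its members in place of copies of $S$.)

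First I would record three closure properties of realizability, which let the construction move freely between powers. (i) \emph{Addition}: the multiset union of two realizations adds their coverage functions, so the realizable functions on a fixed $A^n$ are closed under addition. (ii) \emph{Dimension stability}: if $f$ is realizable in $A^n$, then $(x,y) \mapsto f(x)$ is realizable in $A^{n+m}$ for every $m$, by replicating each copy over all choices $y \in A^m$ of the new coordinates; in particular the $l$- and $(1 \bmod l)$-partition hypotheses survive passing to higher powers. (iii) \emph{Multiplicativity}: if $f$ is realizable in $A^{n_1}$ and $g$ in $A^{n_2}$, then $(x,y) \mapsto f(x)g(y)$ is realizable in $A^{n_1+n_2}$. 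For (iii) the point is that the ``square'' $S \times S \subseteq A^2$ is perfectly packed by the $|S|$ copies $S \times \{y\}$ with $y \in S$; applying this inside each block $C_1 \times C_2$, as $C_1,C_2$ range over the two given realizations, yields a realization with coverage $f(x)g(y)$. Together (i)--(iii) show that the realizable constant functions are closed under addition and multiplication and include $l$ and all its positive multiples, and that from the hypotheses we have at our disposal a realizable $g \equiv 1 \pmod l$ together with the realizable constant $l$.

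With this in hand, the strategy is to transform the given $(1 \bmod l)$-partition into an exact partition. Writing the $(1 \bmod l)$-coverage as $g = 1 + l\,h$ with $h \ge 0$, one must excise the excess $l\,h$. The plan is an induction producing, in ever higher powers, a realizable function that is $\equiv 1 \pmod l$ everywhere but whose maximum value strictly decreases, terminating at the constant $1$. At each step the excess, a pointwise multiple of $l$, is absorbed using the $l$-partition in a fresh block of coordinates, spreading the offending copies out so that, after passing to the product, they can be re-organised into disjoint copies rather than overlapping ones; the multiplicative gadget (iii) is what lets an $l$-fold structure interact with the current coverage in the new coordinates.

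The main obstacle is precisely this last reduction step, and it is where essentially all the work lies. The difficulty is structural: the target $f \equiv 1$ is a multiplicity-one packing, and such an object cannot be written as a sum of two nonempty realizations, so the additive and multiplicative closures above do not by themselves reach it — one genuinely has to exhibit disjoint copies covering every point exactly once. The role of high dimension is to create enough room to resolve the overlaps forced by a $(1 \bmod l)$-partition, using the slack supplied by the $l$-partition; making this resolution precise, namely tracking how the excess is redistributed and verifying that no point is ultimately left uncovered or multiply covered, is the crux of the argument.
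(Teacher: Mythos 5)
The theorem you are proving is quoted in this paper from Gruslys, Leader and Tomon \cite{Gruslys16II} without proof, so there is no in-paper argument to compare against; judged on its own, your write-up is a correct framework followed by an announcement of a plan, not a proof. The closure properties (i)--(iii) are fine: with the paper's notion of a copy, the observation that $S\times S$ is partitioned by the $|S|$ slices $S\times\{y\}$, $y\in S$, does give multiplicativity of realizable coverage functions, and addition and dimension stability are immediate. But the argument stops exactly where the theorem begins. Writing $g=1+lh$ and saying the excess $lh$ must be ``excised'' by ``an induction producing, in ever higher powers, a realizable function that is $\equiv 1 \pmod l$ everywhere but whose maximum value strictly decreases'' is a statement of intent: you never construct, from a realizable $f$ with $f\equiv 1\pmod l$ and $\max f = M>1$ together with the realizable constant $l$, a realizable $f'$ in some higher power with $f'\equiv 1\pmod l$ and $\max f' < M$. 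That single reduction step is the entire content of the Gruslys--Leader--Tan/Tomon theorem, and your closure properties cannot supply it on their own: sums and products of nonnegative realizable functions that exceed $1$ somewhere still exceed $1$ somewhere (indeed the semiring they generate moves away from the constant function $1$ rather than towards it), so some genuinely new mechanism for cancelling multiplicity is required. Your final paragraph honestly describes this difficulty, but describing the crux is not the same as resolving it.

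To make the gap concrete: what is needed is a lemma of roughly the following shape --- if $A^m$ admits an $l$-partition and a $(1\bmod l)$-partition whose maximal multiplicity is $M$, then some power of $A^m$ admits a $(1\bmod l)$-partition with maximal multiplicity strictly less than $M$ --- together with a base for the induction. The proof of such a lemma is a nontrivial combinatorial construction (this is where \cite{Gruslys16I,Gruslys16II} do their work): one has to use the fresh coordinates to separate the $M$ copies through a heavily covered point into distinct ``layers'' and then use the $l$-partition to fill in what the removed excess leaves uncovered, verifying that no point ends up uncovered or covered with the wrong residue. None of this appears in your proposal, so as it stands the proof is incomplete at its essential step.
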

	
	The task of finding an $l$-partition is quite simple. In fact, it
	follows directly from the analogous result in \cite{Gruslys16III}
	and the fact that $(P_{2l})^n$ can be partitioned into copies of
	$Q_n$. For the sake of completeness, we include the proof here.

	\begin{obs} \label{obs:l-tiling}
		Let $H$ be an induced subgraph of $Q_k$ for some $k$.
		Then there is an $|H|$-partition of $(P_{2|H|})^n$ into induced
		copies of $H$, for any $n \ge k$.
	\end{obs}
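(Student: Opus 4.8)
The plan is to combine two easy ingredients, exactly as the hint suggests: first an $l$-partition of $Q_n$ itself into induced copies of $H$ (where $l = |H|$), and second a genuine partition of $(P_{2l})^n$ into induced copies of $Q_n$. Nesting the former inside each part of the latter will produce the required $|H|$-partition.

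For the first ingredient, I would fix an embedding of $H$ as an induced subgraph of $Q_k$ and, using $n \ge k$, view this as an induced copy $H_0 \cong H$ sitting in the first $k$ coordinates of $Q_n = \{0,1\}^n$ (the remaining coordinates set to $0$). For each $t \in \{0,1\}^n$ the translation map $x \mapsto x \oplus t$ is an automorphism of $Q_n$, so every translate $H_0 \oplus t$ is again an induced copy of $H$. I would then count coverings: a vertex $w$ lies in $H_0 \oplus t$ if and only if $w \oplus t \in H_0$, and as $t$ ranges over $\{0,1\}^n$ this happens for exactly $|H_0| = l$ values of $t$. Hence the multiset $\{H_0 \oplus t : t \in \{0,1\}^n\}$ is an $l$-partition of $Q_n$ into induced copies of $H$.

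For the second ingredient, write $P_{2l}$ as the path $v_1 v_2 \cdots v_{2l}$ and split it into the $l$ edges $e_j = \{v_{2j-1}, v_{2j}\}$, each of which is an induced copy of $Q_1 = P_2$. For every function $f : [n] \to [l]$, placing the edge $e_{f(i)}$ in coordinate $i$ yields the product $\prod_{i=1}^n e_{f(i)}$, an induced copy of $Q_n$ in $(P_{2l})^n$; here I would use the fact that a product of induced subgraphs is an induced subgraph of the Cartesian product. These $l^n$ products partition the vertex set of $(P_{2l})^n$, since each coordinate of a given vertex belongs to exactly one edge $e_j$.

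Finally I would nest the two constructions: inside each copy $C$ of $Q_n$ from the second step, apply the first step to obtain an $l$-partition of $C$ into induced copies of $H$; because $C$ is an induced subgraph of $(P_{2l})^n$, these copies are also induced in $(P_{2l})^n$. Taking the union over all $C$, every vertex of $(P_{2l})^n$ lies in a unique $C$ and is covered exactly $l$ times within it, so the whole collection is the desired $|H|$-partition into induced copies of $H$. The only delicate point is the \emph{induced} bookkeeping — that products of induced subgraphs stay induced, and that an induced copy inside an induced subgraph $C$ remains induced in the ambient graph — but these are routine; the counting and the partitioning are otherwise immediate, so I do not expect any serious obstacle here.
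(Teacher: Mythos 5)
Your proposal is correct and follows essentially the same route as the paper: decompose $(P_{2l})^n$ into $l^n$ induced copies of $Q_n$ via the edge-decomposition of $P_{2l}$, and then take all $2^n$ translates of a fixed induced copy of $H$ inside each $Q_n$. Your direct count (each vertex $w$ lies in $H_0 \oplus t$ for exactly $|H|$ values of $t$) is a slightly more explicit version of the paper's symmetry-plus-averaging argument, but the construction is identical.
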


	\begin{proof}
		Denote $l = |H|$.
		Note that, since the path $P_{2l}$ can be partitioned into $l$
		edges, its $n$-th power $(P_{2l})^n$ can be partitioned into
		$l^n$ induced copies of $Q_n$. Thus, it suffices to exhibit an
		$l$-partition of $Q_n$ into induced copies of $H$.
		Let $X$ be the vertex set of some induced copy of $H$ in $Q_n$.
		We consider the set of all \emph{shifts} of $X$. For every $u \in
		Q_n$, we note that the set $X + u = \{x + u : x \in X\}$
		(addition is done coordinate-wise and modulo $2$) is an induced
		copy of $H$ in $Q_n$.
		Consider the collection $\{X + u : u \in Q_n\}$. By symmetry,
		every vertex in $Q_n$ is covered by the same number of sets.
		Furthermore, there are $2^n$ such sets, each covers $l$ points,
		so the number of times each vertex is covered is $\frac{2^n
		l}{2^n} = l$. So, we found an $l$-partition of $Q_n$ into
		induced copies of $H$.
	\end{proof}

	The next task, of finding a $(1 \mod l)$-partition of $(P_{2l})^n$
	into copies of $H$, is significantly harder.		
	Unlike \Cref{obs:l-tiling}, we cannot directly apply the analogous
	result of Gruslys \cite{Gruslys16III}. Instead, we adapt his method
	to our setting. 

	\begin{restatable}{thm}{thmOneModLTiling} \label{thm:one-mod-l-tiling}
		Let $H$ be a non-empty induced subgraph of $Q_k$ for some $k$.
		Then there is a $(1 \mod l)$-partition of $(P_{2l})^k$ into induced
		copies of $H$.
	\end{restatable}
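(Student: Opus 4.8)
Throughout write $l = |H|$. The plan is to recast the problem as a linear-algebra question over $\mathbb{Z}_l$. A $(1 \bmod l)$-partition is a multiset $\mathcal{C}$ of induced copies of $H$ whose coverage function $\sum_{X \in \mathcal{C}} \chi_X$ (counted with multiplicity) is congruent to the all-ones function $\mathbf{1}$ modulo $l$. I would first observe that such a multiset exists if and only if $\mathbf{1}$ lies in the $\mathbb{Z}_l$-span $M$ of the indicator vectors $\chi_X$ of induced copies of $H$ in $(P_{2l})^k$. Indeed, given a relation $\sum_X c_X \chi_X = \mathbf{1}$ over $\mathbb{Z}_l$, one lifts each $c_X$ to its representative in $\{0,1,\dots,l-1\}$ and takes $c_X$ copies of $X$; this yields a genuine finite, nonnegative, and (since $\mathbf 1 \neq 0$) nonempty collection with coverage $\equiv \mathbf{1} \pmod l$, while the converse is immediate upon reducing multiplicities modulo $l$. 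Since $\mathbb{Z}_l \cong \prod_p \mathbb{Z}_{p^{e_p}}$ by the Chinese Remainder Theorem, it further suffices to establish $\mathbf{1} \in M$ modulo each prime power $p^{e_p}$ dividing $l$, so I may assume the modulus is a prime power, matching the prime-power setting of Gruslys \cite{Gruslys16III}.

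Next I would pass to mask polynomials. Identify a $\mathbb{Z}_l$-valued function on the box $V(P_{2l})^k = \{0,\dots,2l-1\}^k$ with a polynomial in $\mathbb{Z}_l[z_1,\dots,z_k]$ via $z^x = \prod_j z_j^{x_j}$. After translating $H$ so that $0 \in H$, the induced copies of $H$ obtained by placing it, in all orientations and coordinate-permutations, on the sub-cubes $\prod_j \{a_j, a_j + 1\}$ correspond to the polynomials $z^a m_\sigma(z)$, where $m_\sigma$ runs over the finitely many masks $\sum_{v \in H} \prod_{j : v_j = 1} z_j$ of $H$ and of its images under reflections and permutations of the cube, and $a$ runs over $\{0,\dots,2l-2\}^k$ (so that every monomial has degree at most $2l-1$ in each variable). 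The all-ones function corresponds to
\[
    T(z) = \prod_{j=1}^{k}\bigl(1 + z_j + \dots + z_j^{2l-1}\bigr),
\]
and the goal becomes: write $T$ as a $\mathbb{Z}_l$-combination of the generators $z^a m_\sigma(z)$. Crucially, each $\chi_X$ has exactly $l$ ones, so every element of $M$ has coordinate-sum $\equiv 0 \pmod l$; since $|V(P_{2l})^k| = (2l)^k \equiv 0 \pmod l$, the all-ones function clears this necessary test, whereas over a single $Q_k$ the sum would be $2^k$, which is generally nonzero mod $l$. This is precisely why the theorem uses the longer path $P_{2l}$: the correcting copies are forced to straddle several of the $l^k$ sub-cubes, and the extra room removes the coordinate-sum obstruction.

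The heart of the argument, which I would carry out by adapting the inductive construction of Gruslys, is to actually exhibit such a combination. A useful structural observation to drive the induction is the factorization $1 + z + \dots + z^{2l-1} = (1+z)\sum_{i=0}^{l-1} z^{2i}$, which gives $T(z) = \prod_j (1 + z_j) \cdot \prod_j \bigl(\sum_{i=0}^{l-1} z_j^{2i}\bigr)$. The first factor is exactly the mask of the full cube $Q_k$, recording that $Q_k$ itself is nearly tileable by $H$; the second factor supplies the $k$ powers of $l$ (it equals $l^k$ at $z = 1$) needed to absorb the divisibility by $l$ that a single cube cannot provide. I would build the combination by peeling off one path-coordinate at a time, using $\sum_i z_j^{2i}$ to lay down $l$ translates of a lower-dimensional configuration along coordinate $j$ and then annihilating the resulting boundary defects with copies that bridge consecutive sub-cubes, exactly as in Gruslys's scheme but now sliding defects along a path of length $2l$ rather than around a cube.

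The main obstacle is this last construction: showing rigorously that $\mathbf{1}$ (equivalently $T$) lies in the module generated over $\mathbb{Z}_l$ by $m$ and its cube-symmetries, with the per-variable degree cap $2l-1$ respected. Two features make it more delicate than the cube case. First, the shape $H$ couples the coordinates, so there is no clean tensor factorization to exploit and the reflections and permutations genuinely have to be used to gain enough freedom. Second, the modulus $l$ need not be a prime power a priori; this is handled by the CRT reduction above, after which one nonetheless works over a local ring rather than a field. I expect that once the single-coordinate correction is set up so that a defect can be transported along $P_{2l}$ and cancelled using the $l$-fold room, the induction on $k$ closes and produces the required $(1 \bmod l)$-partition.
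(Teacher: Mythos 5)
Your reduction to the statement ``$\mathbf{1}$ lies in the $\mathbb{Z}_l$-span $M$ of the indicator vectors of induced copies of $H$'' is sound, as are the CRT reduction and the polynomial encoding (and the factorization $1+z+\dots+z^{2l-1}=(1+z)\sum_{i=0}^{l-1}z^{2i}$ is a nice way to see why $P_{2l}$ rather than $Q_k$ gives the needed room). But none of this is the content of the theorem: it is a change of notation followed by an IOU. You say yourself that ``the main obstacle is this last construction'' and that you ``expect'' the induction on $k$ to close. That construction \emph{is} the proof; everything before it is routine. As it stands the proposal proves nothing beyond the (standard) equivalence between $(1\bmod l)$-partitions and membership of $\mathbf{1}$ in $M$.

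Moreover, the sketch of the induction has a concrete hole that signals the missing idea. You propose to ``peel off one path-coordinate at a time, laying down $l$ translates of a lower-dimensional configuration along coordinate $j$'' --- but a lower-dimensional configuration of \emph{what}? A copy of $H$ spans a full unit subcube $\prod_j\{a_j,a_j+1\}$ in all $k$ coordinates, so copies of $H$ cannot live inside $(P_{2l})^{k-1}$, and you correctly note there is no tensor factorization to fall back on. The paper's proof resolves exactly this point: it splits $H$ along one cube coordinate into $H_-$ and $H_+$, strengthens the induction hypothesis so that it applies to the smaller shape $H_-$ with the \emph{same} modulus $l$ (which is why the modulus must be decoupled from $|H|$ in the inductive claim), extends each copy of $H_-$ sitting at level $p$ of the last path coordinate to a copy of $H$ straddling levels $\{p,p+1\}$, and then builds, for each $p$, an antisymmetric ``defect gadget'' $\MH_p=(l-1)\MH+\MH'$ whose coverage is $0\pmod l$ except for equal and opposite defects at levels $p\pm1$; a weighted telescoping sum of the $\MH_p$ over $p\in[2l-2]$ then yields coverage $1\pmod l$ everywhere. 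Your phrase ``annihilating boundary defects with copies that bridge consecutive sub-cubes'' points in this direction, but without the $H_-/H_+$ decomposition, the strengthened induction hypothesis, and the explicit cancellation scheme, the argument does not exist yet.
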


	We note that in \Cref{thm:tiling-powers-paths},
	unlike \Cref{obs:l-tiling}, there
	is no restriction on the order of $H$.
	
	\remark{spacing before mod}
	
	Before proceeding to the proof of \Cref{thm:one-mod-l-tiling}, we
	show how to prove \Cref{thm:tiling-powers-paths} using
	\Cref{obs:l-tiling,thm:one-mod-l-tiling}.

	\begin{proof} [ of \Cref{thm:tiling-powers-paths}]
		Denote $l = |H|$.
		Note that it suffices to show that for some $n$ the graph 
		$(P_{2l})^n$ can be partitioned into induced copies of $H$.
		Recall that $H$ is an induced subgraph of $Q_k$, for some $k$. By
		\Cref{thm:one-mod-l-tiling}, there is a $(1 \mod l)$-partition of
		$A = (P_{2l})^k$ into induced copies of $H$. By
		\Cref{obs:l-tiling} there is an $l$-partition of $A$ into induced
		copies of $H$. Hence, by \Cref{thm:general-tiling} there exists
		$n$ for which there is a perfect $H$-packing of $A^n =
		(P_{2l})^{kn}$, as required. 
	\end{proof}
	
	We now proceed to the proof of \Cref{thm:one-mod-l-tiling}.

	\begin{proof} [ of \Cref{thm:one-mod-l-tiling}]
		We shall prove the following slightly stronger claim: if $H$ is a
		non-empty induced subgraph of $Q_k$, then there is a $(1 \mod r)$
		partition of $(P_{2l})^k$ into isometric copies of $H$.

		Let us first explain briefly what we mean by an isometric copy of $H$
		in a graph $G$.  We consider the graphs $Q_k$ and $G$ together with
		the metric coming from the graph distance. An isometric copy of $H$
		is the image of $H$ under an isometry $f: Q_k \rightarrow G$ (here we
		fix a particular embedding of $H$ in $Q_k$). 
		
		Define $H_{-}$ and $H_{+}$ as follows.
		\begin{align*}
			&H_{-} = \{u \in Q_{k-1} : (u, 0) \in H\} \\
			&H_{+} = \{u \in Q_{k-1} : (u, 1) \in H\}.
		\end{align*}

		We prove the claim by induction on $k$. It is trivial for $k = 1$
		(then $H$ is either a single vertex or an edge), so
		suppose that $k \ge 2$ and the claim holds for $k - 1$.
		Note that we may assume that $H_{-}$ and $H_{+}$ are both non-empty.
		We shall show that $(P_{2l})^k$ has a $(1 \mod l)$-partition into
		isometric copies of $H$.

		We denote the vertices of $P_{2l}$ by $\{0, 1, \ldots,
		2l-1\}$.
		Let $p \in [2l-2]$.  By induction, there is a collection of isometric
		copies of $H_{-}$ in $(P_{2l})^{k - 1} \times \{p\}$ such that each
		point is covered $(1 \mod l)$ times.  Let $A$ be the vertex set of
		such a copy of $H_{-}$. Then there is an isometric copy of $H$ in
		$(P_{2l})^{k - 1} \times \{p, p + 1\}$ whose intersection with
		$(P_{2l})^{k - 1} \times \{p\}$ is $A$.  It follows that there exists
		a collection $\MH$ of isometric copies of $H$ in $(P_{2l})^{k - 1} \times
		\{p, p + 1\}$ for which every point in $(P_{2l})^{k - 1} \times \{p\}$ is
		covered $(1 \mod l)$ times. Let $\MH'$ be the collection of isometric
		copies of $H$ in $(P_{2l})^{k - 1} \times \{p - 1, p\}$, which is the
		image
		of $\MH$ under the map from $(P_{2l})^{k - 1} \times \{p, p + 1\}$ to
		$(P_{2l})^{k - 1} \times \{p -1, p\}$ obtained by changed the last
		coordinate from $p + 1$ to $p - 1$.

		\remark{is there a nicer way to write this?}

		Denote by $\MH_p$ the collection $(l - 1)\MH + \MH'$ (i.e.~each copy of
		$H$ in $\MH$ is taken $l - 1$ times).  $\MH$ is a collection of copies
		of $H$ in $(P_{2l})^{k-1} \times \{p-1, p, p+1\}$ which we view as a
		collection of copies of $H$ in $(P_{2l})^k$. For every $x
		\in (P_{2l})^k$, the number of times $x$ is covered is 
		\begin{align*}
			w_p(x) = \left\{
				\begin{array}{ll}
					(1 \mod l)	&	x \in (P_{2l})^{k - 1} \times \{p + 1\} \\
					(-1 \mod l)	&	x \in (P_{2l})^{k - 1} \times \{p - 1\} \\
					(0 \mod l) 	&	\text{otherwise}
				\end{array}
			\right.
		\end{align*}

		Let $\G$ be the
		collection of isometric copies of $H$ obtained by taking $i$ copies
		of $\MH_{2i}$ and $\MH_{2i - 1}$ for each $i \in [l - 1]$. 
		We show that every vertex in $(P_{2l})^n$ is covered $(1
		\mod l)$ time by $\G$.
		Let $x \in
		(P_{2l})^{k - 1}$. Then $(x, 0)$ and $(x, 1)$ are covered $(1 \mod l)$
		times (they get non zero weight only in $\MH_1$ and $\MH_2$
		respectively). The vertices $(x, 2i)$ and $(x, 2i+1)$ (where $i \in
		[l - 2]$) are covered
		$(-i \mod l)$ times by $\MH_{2i - 1}$ and $H_{2i}$ respectively, and
		$(i + 1 \mod l)$ times by $H_{2i + 1}$ and $H_{2i + 2}$, so in total
		they are covered $(1 \mod l)$ times. Finally, $(x, 2l - 2)$ and $(x,
		2l - 1)$ are covered $l - 1$ times by $H_{2l-3}$ and $H_{2l-2}$
		respectively, so the number of times they are covered is $(-(l-1)
		\mod l) = (1 \mod l)$.
	\end{proof}
		
\section{Almost partitioning the hypercube into powers of a path}
	\label{sec:tiling-with-powers-paths}

	Our main aim in this section is to prove
	\Cref{thm:almost-tiling-induced-paths}.  \thmAlmostTilingInducedPaths*
	Before proceeding to the proof, we make several observations and mention
	a result that we shall use.  The following observation makes use of the
	fact that $Q_n$ is Hamiltonian to prove the special case of
	\Cref{thm:almost-tiling-induced-paths} where $l$ is odd and the
	requirement that the copies are induced is dropped.

	\begin{obs} \label{obs:tiling-odd-paths}
		Let $l$ be odd, and let $t$ be a positive integer. 
		Then there exists a $(P_l)^t$-packing of $Q_n$, that covers all but
		at most $O(n^{t-1})$ vertices.
	\end{obs}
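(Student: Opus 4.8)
The plan is to induct on $t$, using the Hamiltonicity of $Q_n$ for the base case and a product construction for the inductive step. Write $f_t(n)$ for the least number of vertices left uncovered by a $(P_l)^t$-packing of $Q_n$; the goal is to show $f_t(n) = O(n^{t-1})$. For $t = 1$, a Hamilton path of $Q_n$ splits into $\lfloor 2^n/l \rfloor$ copies of $P_l$ with fewer than $l$ leftover vertices, so $f_1(n) \le l - 1 = O(1)$; moreover, deleting one vertex from a Hamilton cycle of $Q_m$ shows that $f_1(m) = 1$ whenever $l \mid 2^m - 1$, since the resulting Hamilton path on $2^m - 1$ vertices then decomposes exactly into copies of $P_l$.

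The key point is to fix a good block size. Since $l$ is odd, $2$ is invertible modulo $l$, so it has a multiplicative order $d$, and $l \mid 2^m - 1$ precisely when $d \mid m$. I fix once and for all a multiple $m$ of $d$. Then $Q_m$ has a $P_l$-packing leaving exactly one point $r$ uncovered; write $C_m = V(Q_m) \setminus \{r\}$ for its covered part, which is partitioned into copies of $P_l$.

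For the inductive step, decompose $Q_n = Q_{n-m} \times Q_m$. By induction there is a $(P_l)^{t-1}$-packing of $Q_{n-m}$ covering a set $C$, with uncovered set $R$ of size $f_{t-1}(n-m)$. Taking Cartesian products of each copy of $(P_l)^{t-1}$ in $C$ with each copy of $P_l$ in $C_m$ yields copies of $(P_l)^{t-1} \times P_l = (P_l)^t$ covering exactly $C \times C_m$. The remaining vertices form the disjoint union $(Q_{n-m} \times \{r\}) \sqcup (R \times C_m)$. The first part is an induced copy of $Q_{n-m}$, which I pack recursively, leaving $f_t(n-m)$ vertices. The second part is a disjoint union of $|R| = f_{t-1}(n-m)$ induced copies of $C_m \subseteq Q_m$, each packed with at most $2^m$ uncovered vertices (a crude bound suffices). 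This yields the recursion
\begin{equation*}
    f_t(n) \le f_t(n - m) + 2^m f_{t-1}(n-m).
\end{equation*}
It is essential that the first coefficient is exactly $1$: this is precisely what the divisibility $d \mid m$ buys us, since it forces $Q_m$ to have a single uncovered point rather than up to $l-1$ of them, which would turn the recursion exponential.

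Finally, feeding $f_{t-1}(n-m) = O(n^{t-2})$ into the recursion and telescoping over the roughly $n/m$ steps (using the trivial bound $f_t(n) \le 2^n$ for the bounded base cases) gives $f_t(n) = O(n^{t-1})$, with a constant depending only on $l$, $t$ and the fixed block size $m$. The main obstacle is the bookkeeping around the leftover region: one must arrange the two leftover parts to be genuinely disjoint so that the recursive packings do not overlap, and keep the per-step multiplier equal to $1$, which is exactly why choosing $m$ divisible by the order of $2$ modulo $l$ is the crux of the argument.
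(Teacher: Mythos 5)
Your proof is correct and uses essentially the same approach as the paper: both hinge on the Hamiltonicity of $Q_m$ together with Fermat--Euler to get a $P_l$-packing of $Q_m$ with exactly one uncovered vertex, and then exploit the product structure $Q_n = (Q_m)^r \times Q_a$. The only difference is bookkeeping --- you peel off one $Q_m$-block at a time and solve a recursion in $t$ and $n$, whereas the paper takes the full product partition of $(Q_m)^r$ and directly counts the factors containing fewer than $t$ path-coordinates.
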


	\begin{proof}
		It is well known that $Q_m$ is Hamiltonian. Therefore, if $l$ divides
		$2^m - 1$, then $Q_m$ may be partitioned into copies of $P_l$ and a
		single vertex. Note that by the Fermat-Euler theorem, there exists
		$m$ such that $l$ divides $2^m - 1$ (take $m = \phi(l)$, where
		$\phi(n)$ is Euler's totient function, counting the number of
		integers $p < n$ for which $p$ and $n$ are coprime).  It follows that
		all but at most $2^{m(t-1)} \cdot |[r]^{< t}|$ vertices of $(Q_m)^r$
		can be partitioned into copies of $(P_l)^t$.  Given any $n$, write $n
		= rm + a$ where $a < r$. Since we may view $Q_n$ as $(Q_m)^{r} \times
		Q_a$, there is a collection of pairwise disjoint induced copies of
		$(P_l)^t$ that covers all but at most $2^{a + m(t-1)} \cdot
		|[r]^{<t}|\le 2^m \cdot n^{t - 1} = O(n^{t - 1})$ vertices.
	\end{proof}

	The following corollary allows us to extend \Cref{obs:tiling-odd-paths}
	to all $l$.

	\begin{cor} \label{cor:tiling-even-paths}
		Let $l$ and $t$ be integers. Then all but $O(n^{t-1})$ vertices
		of $Q_n$ may be partitioned into copies of $(P_l)^t$.
	\end{cor}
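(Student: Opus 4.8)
The plan is to reduce the even case to the odd case already handled by \Cref{obs:tiling-odd-paths}, using the fact that a path on twice as many vertices embeds as a spanning subgraph of a ladder. Concretely, write $l = 2^s m$ with $m$ odd, and set $d = st$. I will show that $(P_l)^t$ is a spanning subgraph of $(P_m)^t \times Q_d$, and then transfer an almost-partition of $Q_{n-d}$ into copies of $(P_m)^t$ (which exists by \Cref{obs:tiling-odd-paths}, since $m$ is odd) to an almost-partition of $Q_n = Q_{n-d} \times Q_d$ into copies of $(P_l)^t$.

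First I would establish the embedding lemma: for every $L$, the path $P_{2L}$ is a spanning subgraph of $P_L \times Q_1$. Indeed, the ladder $P_L \times Q_1$ has the Hamilton path that runs along one side, crosses a single rung, and returns along the other side; this Hamilton path is a spanning copy of $P_{2L}$. Iterating this — together with the elementary facts that if $A$ is a spanning subgraph of $B$ then $A \times C$ is a spanning subgraph of $B \times C$, and that spanning containment is transitive — shows by induction on $s$ that $P_{2^s m}$ is a spanning subgraph of $P_m \times Q_s$. Taking $t$-th Cartesian powers, and using that $A$ being a spanning subgraph of $B$ implies $A^t$ is a spanning subgraph of $B^t$, I obtain that $(P_l)^t = (P_{2^s m})^t$ is a spanning subgraph of $(P_m \times Q_s)^t = (P_m)^t \times Q_d$.

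Next I would apply \Cref{obs:tiling-odd-paths} to the odd integer $m$: it provides a collection $C_1, \dots, C_N$ of vertex-disjoint copies of $(P_m)^t$ in $Q_{n-d}$ covering all but $O((n-d)^{t-1}) = O(n^{t-1})$ vertices. Viewing $Q_n = Q_{n-d} \times Q_d$, the products $C_1 \times Q_d, \dots, C_N \times Q_d$ are vertex-disjoint copies of $(P_m)^t \times Q_d$ in $Q_n$, and together they miss only $U \times Q_d$, where $U$ is the uncovered set in $Q_{n-d}$; this has size $2^d \cdot O(n^{t-1}) = O(n^{t-1})$, since $d = st$ is a constant depending only on $l$ and $t$. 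Finally, by the embedding lemma each copy $C_j \times Q_d \cong (P_m)^t \times Q_d$ contains a spanning copy of $(P_l)^t$; replacing $C_j \times Q_d$ by this spanning copy covers exactly the same vertex set, so the resulting collection is a packing of $Q_n$ into copies of $(P_l)^t$ leaving at most $O(n^{t-1})$ vertices uncovered.

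The only genuinely non-routine step is the embedding lemma — locating the Hamilton path of the ladder and propagating the spanning-subgraph relation through Cartesian products and powers. Once that is in place, the error bookkeeping is immediate, because $d = st$ depends only on $l$ and $t$ and hence the factor $2^d$ is an absolute constant that does not affect the order of magnitude of the uncovered set.
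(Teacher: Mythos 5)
Your proposal is correct and takes essentially the same route as the paper: the paper proves the corollary by induction on the power of $2$ dividing $l$, peeling off one factor of $2$ at a time via the observation that $P_k \times Q_1$ has a spanning $P_{2k}$, whereas you simply unroll that induction and pass from the odd part $m$ to $l = 2^s m$ in one step using the iterated embedding $P_{2^s m} \subseteq P_m \times Q_s$. The key lemma (the Hamilton path of the ladder) and the error bookkeeping (a constant factor $2^{st}$) are identical.
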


	\begin{proof}
		We prove the statement by induction on $i$, the maximum power of $2$
		that divides $l$.  If $i = 0$, $l$ is odd, and the statement follows
		from \Cref{obs:tiling-odd-paths}.  Now suppose that $i \ge 1$.  Write
		$l = 2k$ and $Q_n = Q_{n - t} \times Q_t$.  By induction, there is a
		collection of pairwise disjoint copies of $(P_k)^t$ that covers all
		but at most $O(n^{t - 1})$ vertices in $Q_{n-t}$.  Note that the
		product $P_k \times Q_1$ is Hamiltonian, i.e.~it spans a $P_{2k} =
		P_l$.  We conclude that $Q_n$ may be covered by pairwise disjoint
		copies of $(P_i)^t$ and a remainder of at most
		$O(n^{t-1} \cdot 2^t) = O(n^{t-1})$ vertices.
	\end{proof}

		Let $\MH_l$ be the collection of graphs on $l$ vertices which have a
		Hamilton path, and let $(\MH_l)^t = \{H_1 \times \ldots \times H_t :
		H_i \in \MH\}$.
		We note that the proofs of
		\Cref{obs:tiling-odd-paths,cor:tiling-even-paths} actually give
		the following slightly stronger statement.

	\begin{cor}
		Let $l$ and $t$ be integers. Then there is a collection of
		pairwise disjoint copies of graphs in $(\MH_l)^t$ that covers all
		but at most $O(n^{t-1})$ vertices of $Q_n$.
	\end{cor}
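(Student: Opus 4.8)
The plan is to re-run the proofs of \Cref{obs:tiling-odd-paths,cor:tiling-even-paths} essentially verbatim, only keeping track of which graphs the covering copies are isomorphic to and checking that at every stage they belong to $(\MH_l)^t$ rather than merely being copies of $(P_l)^t$. Since $P_l \in \MH_l$ we have $(P_l)^t \in (\MH_l)^t$, so this is genuinely a strengthening, and the whole content of the proof is the remark that nothing is lost if, in the inductive step of \Cref{cor:tiling-even-paths}, we decline to pass to a spanning path.

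First I would record the single fact that needs checking: if $H$ is a graph on $s$ vertices possessing a Hamilton path, then so does $H \times Q_1$; that is, $H \in \MH_s$ implies $H \times Q_1 \in \MH_{2s}$. Writing a Hamilton path of $H$ as $v_1 \cdots v_s$, the sequence $(v_1, 0), \ldots, (v_s, 0), (v_s, 1), (v_{s-1}, 1), \ldots, (v_1, 1)$ is a Hamilton path of $H \times Q_1$, since consecutive vertices differ either along an edge of $H$ within a single layer or along the matching edge between the two layers at $v_s$. This is precisely the generalisation, from $H = P_k$ to an arbitrary $H \in \MH_k$, of the fact that $P_k \times Q_1$ spans $P_{2k}$ used in \Cref{cor:tiling-even-paths}.

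Next I would argue by induction on $i$, the largest power of $2$ dividing $l$, mirroring \Cref{cor:tiling-even-paths}. For $i = 0$ the integer $l$ is odd, and \Cref{obs:tiling-odd-paths} supplies pairwise disjoint copies of $(P_l)^t$ covering all but $O(n^{t-1})$ vertices; as $P_l \in \MH_l$, these already have the required form. For $i \ge 1$ I would write $l = 2k$ and $Q_n = Q_{n-t} \times Q_t$, apply the induction hypothesis to $k$ (whose largest power of $2$ is $i - 1$) to cover all but $O(n^{t-1})$ vertices of $Q_{n-t}$ by disjoint copies of products $H_1 \times \cdots \times H_t$ with each $H_j \in \MH_k$, and then thicken each factor $H_j$ using one of the $t$ coordinates of $Q_t$. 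By the fact above each $H_j \times Q_1$ lies in $\MH_l$, so each thickened copy is a copy of $(H_1 \times Q_1) \times \cdots \times (H_t \times Q_1) \in (\MH_l)^t$; the copies remain pairwise disjoint and, exactly as before, the number of uncovered vertices grows by a factor of at most $2^t$ and hence stays $O(n^{t-1})$.

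I do not expect any real obstacle: the only step with any content is the Hamilton-path fact for the prism $H \times Q_1$, which has the short explicit path above. The remainder is the identical error-term bookkeeping of \Cref{obs:tiling-odd-paths,cor:tiling-even-paths}, now phrased so as to retain the prisms $H_j \times Q_1$ themselves rather than replacing them by spanning paths — which is exactly what upgrades the conclusion from $(P_l)^t$ to $(\MH_l)^t$.
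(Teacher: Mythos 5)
Your proposal is correct and matches the paper's intent exactly: the paper gives no separate proof of this corollary, merely noting that the arguments of \Cref{obs:tiling-odd-paths,cor:tiling-even-paths} already yield it, and your write-up is precisely that re-run, with the one genuinely needed observation (that $H\in\MH_k$ implies $H\times Q_1\in\MH_{2k}$, via the path that traverses one layer and returns through the other) made explicit rather than discarding the prism in favour of its spanning path.
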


	In order to obtain an almost-partition into induced copies of
	$(P_l)^t$, we need two more ingredients.
	One is the following observation.

	\begin{obs} \label{obs:hamilton-times-induced-path}
		Let $H \in H_l$. Then $H \times P_{l - 1}$ may be partitioned
		into induced copies of $P_l$.
	\end{obs}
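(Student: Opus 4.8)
The plan is to fix a Hamilton path $v_1 v_2 \cdots v_l$ of $H$ and use it to identify $V(H)$ with $[l]$ (so that $i \sim i+1$ always holds in $H$, while $H$ may have additional chords) and to identify $V(P_{l-1})$ with $\{0, 1, \ldots, l-2\}$. Under this identification I view $V(H \times P_{l-1})$ as an $l \times (l-1)$ array of cells $(i,j)$, with a \emph{horizontal} edge between $(i,j)$ and $(i,j+1)$, a \emph{vertical} edge between $(i,j)$ and $(i+1,j)$ coming from the Hamilton path, and possibly extra vertical \emph{chord} edges between $(i,j)$ and $(i',j)$ whenever $v_i v_{i'} \in E(H)$. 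The only obstruction to a monotone staircase being an induced $P_l$ comes from these chords, so the guiding principle is to build paths in which \emph{the only pairs of cells sharing a column lie in consecutive rows and are also consecutive along the path}; for such a path no chord can join two non-consecutive vertices, and inducedness holds for every $H$ simultaneously (in particular for $H = K_l$, the worst case).

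Concretely, I would define, for each $k \in [l-1]$, the path $P^{(k)}$ that starts at $(k,0)$, runs right along row $k$ to the cell $(k, l-k-1)$, then takes a single downward step to $(k+1, l-k-1)$, and finally runs right along row $k+1$ to $(k+1, l-2)$. This path has $l-k$ cells in row $k$ and $k$ cells in row $k+1$, hence exactly $l$ cells, and each consecutive pair is joined by a horizontal edge or by the one Hamilton vertical edge, so $P^{(k)} \cong P_l$.

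Next I would check that $\{P^{(k)} : k \in [l-1]\}$ partitions the array. The cells of $P^{(k)}$ in row $k$ occupy columns $\{0, \ldots, l-k-1\}$ and those in row $k+1$ occupy columns $\{l-k-1, \ldots, l-2\}$. Thus a cell $(i,j)$ with $2 \le i \le l-1$ lies in $P^{(i)}$ exactly when $j \le l-i-1$ and in $P^{(i-1)}$ exactly when $j \ge l-i$; since these two column ranges partition $\{0, \ldots, l-2\}$, it is covered once. Row $1$ is covered entirely by the top segment of $P^{(1)}$ and row $l$ entirely by the bottom segment of $P^{(l-1)}$. Verifying these boundary rows, and that the two ranges abut in exactly one column, is the only genuine bookkeeping.

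Finally, for inducedness, within $P^{(k)}$ the two column ranges overlap in the single column $l-k-1$, so the unique pair of cells sharing a column is $\{(k, l-k-1),\, (k+1, l-k-1)\}$, which lie in consecutive rows and are consecutive along the path — exactly the safe configuration. Every other column meets $P^{(k)}$ in a single cell, and within each of its two rows the cells form a contiguous block of columns, so same-row adjacencies occur only between path-consecutive cells. Hence no chord of $H$ can create a non-consecutive adjacency, and each $P^{(k)}$ is an \emph{induced} copy of $P_l$. I do not expect a real obstacle here: the construction is explicit and the verification elementary, the only point requiring care being the boundary behaviour of the complementary column ranges at $k=1$ and $k=l-1$.
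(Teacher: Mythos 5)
Your proof is correct and is essentially the paper's own argument: the staircase paths $P^{(k)}$ coincide (up to relabelling the columns $0,\dots,l-2$ as $1,\dots,l-1$) with the paths $Q_i = ((i,1),\dots,(i,l-i),(i+1,l-i),\dots,(i+1,l-1))$ used in the paper. The only difference is that you spell out the partition and inducedness checks that the paper dismisses as ``easy to see,'' which is fine.
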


	\begin{proof}
		Denote the vertices of $H$ by $[l]$ and suppose
		that $(1,\ldots,l)$ is a path in $H$.
		Similarly, we denote the vertices of $P_{l-1}$ by $[l-1]$. Let
		$Q_i$ be the path $((i, 1), \ldots, (i, l-i),
		(i+1, l-i), \ldots, (i+1, l-1))$, for $i \in [l-1]$ (see
		\Cref{fig:ham-times-induced}).
		It is easy to see that each $Q_i$ is an induced $P_l$.
	\end{proof}

	\begin{figure}
		\centering
		\includegraphics[scale = 1]{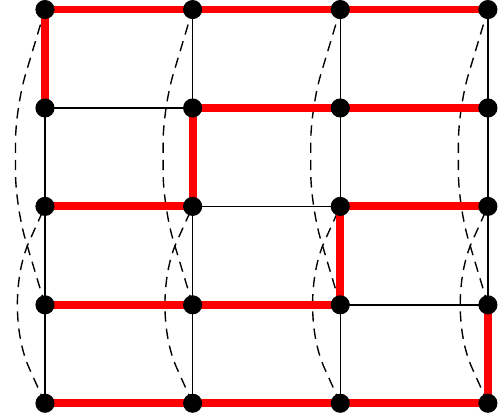}
		\caption{Illustration of the paths in
			\Cref{obs:hamilton-times-induced-path} (vertical lines denote
			copies of $H$)}
		\label{fig:ham-times-induced}
	\end{figure}

	The second ingredient is a result of Ramras \cite{Ramras92}, which
	states that if $n + 1$ is a power of $2$, then $Q_n$ may be
	partitioned into antipodal paths. In particular, we have the
	following corollary.
	
	\begin{cor}[Ramras \cite{Ramras92}] \label{cor:antipodal-paths} 
		If $n + 1$ is a power of $2$, then $Q_n$ may be
		partitioned into induced copies of $P_{n + 1}$.
	\end{cor}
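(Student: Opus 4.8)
The plan is to reduce the statement to partitioning $Q_n$ into \emph{antipodal geodesics}, and then to produce such a partition from a single geodesic by translation. First observe that any shortest path between a vertex $x$ and its antipode $\bar x$ in $Q_n$ has exactly $n + 1$ vertices and flips each coordinate precisely once; such a path is automatically an induced $P_{n+1}$, since two of its vertices at path-distance $d$ differ in exactly $d$ coordinates and are therefore non-adjacent unless $d = 1$. Thus it suffices to exhibit $2^n / (n+1) = 2^{n-m}$ vertex-disjoint antipodal geodesics covering $Q_n$, where $n + 1 = 2^m$.

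Identify $V(Q_n)$ with $\mathbb{F}_2^n$, so that translation by any $v \in \mathbb{F}_2^n$ is a graph automorphism of $Q_n$ that sends geodesics to geodesics. I would fix the ``staircase'' geodesic $P$ with vertex set $\{q_0, q_1, \ldots, q_n\}$, where $q_j = e_1 + \cdots + e_j$ (so $q_0 = 0$ and $q_n = \mathbf{1}$), and consider the family of translates $\{P + s : s \in S\}$ for a suitable subgroup $S \le \mathbb{F}_2^n$. Each translate $P + s$ is an antipodal geodesic, running from $s$ to $\bar s$, so the whole problem becomes: choose $S$ so that these translates are pairwise disjoint and cover everything.

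The key step is to choose $S$ of index $n + 1 = 2^m$ so that $V(P)$ is a transversal of $S$, i.e.\ contains exactly one element of each coset. Since $q_i + q_j$ (for $i > j$) is the interval vector $e_{j+1} + \cdots + e_i$, this is equivalent to asking that $S$ avoid every nonzero interval vector. I would realise $S$ as the kernel of a linear surjection $\phi : \mathbb{F}_2^n \to \mathbb{F}_2^m$ and demand that $\phi(q_0), \ldots, \phi(q_n)$ be distinct; as there are $n + 1 = 2^m$ of them, this forces $\phi$ restricted to $V(P)$ to be a bijection onto $\mathbb{F}_2^m$. Writing $\phi(e_j) = v_j$, the images are the partial sums $\phi(q_j) = v_1 + \cdots + v_j$, so the requirement is exactly that these partial sums enumerate $\mathbb{F}_2^m$. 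This is easy to arrange: take any enumeration $w_0 = 0, w_1, \ldots, w_{2^m - 1}$ of $\mathbb{F}_2^m$ and set $v_j = w_j + w_{j-1}$, which yields $\phi(q_j) = w_j$.

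Finally I would verify the partition. Disjointness follows from the transversal property: if $q_i + s = q_j + s'$ with $s, s' \in S$, then $q_i + q_j = s + s' \in S$, forcing $\phi(q_i) = \phi(q_j)$ and hence $i = j$ and $s = s'$. A count then closes the argument, since $|S| \cdot |V(P)| = 2^{n-m} \cdot 2^m = 2^n = |V(Q_n)|$, so the translates tile $Q_n$ and each is an induced $P_{n+1}$. The only place the hypothesis enters is the passage to an index-$(n+1)$ subgroup, which needs $n + 1$ to be a power of $2$; this is also the real content of the argument, the remainder being bookkeeping. The main obstacle is thus purely the existence of the subgroup $S$, which the linear-algebraic reformulation above resolves cleanly.
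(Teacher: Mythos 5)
Your argument is correct and complete. The reduction to antipodal geodesics is sound: a shortest $x$--$\bar x$ path flips each coordinate exactly once, so vertices at path-distance $d$ are at Hamming distance $d$ and the path is automatically induced. The construction of the subgroup $S=\ker\phi$ via the telescoping choice $v_j = w_j + w_{j-1}$ does make the partial sums $\phi(q_j)$ enumerate $\mathbb{F}_2^m$, so $V(P)$ is a transversal of the cosets of $S$, and the disjointness and counting steps then go through exactly as you say.

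Where you differ from the paper: the paper does not prove this statement at all --- it invokes a theorem of Ramras asserting that $Q_n$ can be partitioned into antipodal paths when $n+1$ is a power of $2$, and records the corollary only because antipodal paths are induced. You have in effect reproved Ramras's theorem from scratch. Your route, translating a single ``staircase'' geodesic by a linear code of index $n+1$ chosen so that the path is a transversal of its cosets, is a clean coding-theoretic construction and arguably more self-contained than the inductive doubling argument in the cited source; the cost is that it is a page of work where the paper spends one line. For the purposes of this paper the citation suffices, but your proof is a legitimate and pleasantly explicit alternative, and it isolates exactly where the hypothesis that $n+1$ is a power of $2$ is used (the existence of the index-$(n+1)$ subgroup).
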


	We are now ready to prove \Cref{thm:almost-tiling-induced-paths}.

	\begin{proof}[ of \Cref{thm:almost-tiling-induced-paths}]
		Let $m$ be minimal such that $2^m \ge l^2$.
		Suppose that $2^m = a (\mod l)$ where $0 \le a < l$ and write $2^m=
		(l-1-a)(l-1) + b$. Note that $(l-1-a)(l-1) < l^2$ so $b > 0$, and
		by choice of $m$, $b \le 2l^2$.
		Furthermore, $b = -1 (\mod l)$.
		The path $P_{2^m}$ can be partitioned into $l-1-a$ copies of
		$P_{l-1}$ and one copy of $P_{b}$.
		It follows from \Cref{cor:antipodal-paths} that $Q_{2^m-1}$ may be
		partitioned into induced copies of $P_{l-1}$ and $P_b$, implying
		that $(Q_{2^m-1})^{2t}$ may be partitioned into induced copies of
		$(P_{l-1})^t$ and $(P_b)^t$.

		Write $Q_n = Q_{n - t (2^m-1)} \times (Q_{2^m-1})^{2t}$.  Recall that
		by \Cref{cor:tiling-even-paths}, $Q_{n - t (2^m-1)}$  may be
		partitioned into copies of graphs in $(\MH_{b+1})^t$ and a remainder
		of at most $O(n^{t-1})$ vertices.

		Combining these two facts, we conclude that $Q_n$ can be partitioned
		into copies of graphs isomorphic to $\prod_{i \in [t]}(H_i \times
		P_x)$ (where $H_i \in \MH_{b+1}$ and $x \in \{l-1, b\}$), and a
		remainder of at most $O\left(n^{t-1} \cdot 2^{t (2^m-1)}\right) =
		O(n^{t-1})$.  We claim that if $H \in \MH_{b+1}$ and $x
		\in \{l-1, b\}$, then $H \times P_x$ may be partitioned into induced
		copies of $P_l$.  Indeed, if $x = b$ then by
		\Cref{obs:hamilton-times-induced-path}, $H \times P_x$ may be
		partitioned into induced $P_{b+1}$'s, which may in turn be
		partitioned into induced $P_l$'s (since, by choice of $b$, $l$ divides
		$b+1$). A similar argument holds if $x = l-1$: first note that $H$
		may be partitioned into graphs in $\MH_l$ (since $l$ divides $b+1$),
		and the product of these graphs with $P_{l-1}$ may be partitioned
		into induced copies of $P_l$, by
		\Cref{obs:hamilton-times-induced-path}.  It follows that $Q_n$ may be
		partitioned into induced copies of $(P_l)^t$ and a remainder of order
		at most $O(n^{t-1})$.
	\end{proof}

\section{A lower bound on the number of uncovered vertices}
	\label{sec:missing-vs}
	In this section we prove \Cref{thm:missing-vs}.
	\thmMissingVs*

	We start by proving the following propositions that characterises the
	intersection of a copy of $(P_3)^t$ in $Q_n$ with a subcube of
	co-dimension $1$.
	
	\begin{prop} \label{prop:intersect-codim-one}
		Let $H$ be a copy of $(P_3)^k$ in $Q_n$. Then the intersection of
		$H$ with any subcube $S$ of co-dimension $1$ is a copy of one of
		the following graphs: $\emptyset$, $(P_3)^{k-1}$, $P_2 \times
		(P_3)^{k-1}$ or $(P_3)^k$.
	\end{prop}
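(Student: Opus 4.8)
The plan is to exploit the explicit coordinate structure of a copy of $(P_3)^k$ in $Q_n$. First I would record what such a copy looks like. Recall that $P_3$ is an induced subgraph of $Q_2$: up to symmetry its three vertices are $00$, $01$, $11$, forming the path $00 - 01 - 11$ with $01$ as the centre. Consequently a copy $H$ of $(P_3)^k$ in $Q_n$ is supported on $2k$ of the $n$ coordinates, which split into $k$ disjoint pairs $\{i_1, j_1\}, \ldots, \{i_k, j_k\}$; on the pair $\{i_m, j_m\}$ the copy carries an isometric $P_3$ (an ``L-shape'' in the corresponding $2$-dimensional subcube), while each of the remaining $n - 2k$ coordinates is fixed to a constant. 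This matches the definition of a copy given in \Cref{sec:one-mod-l}, and any coordinate flips merely relabel $0 \leftrightarrow 1$ within a coordinate and so do not affect the argument below.

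A subcube $S$ of co-dimension $1$ is the set of points whose value in a single coordinate $c$ equals a fixed $\varepsilon \in \{0, 1\}$. I would then split into two cases according to whether $c$ lies among the $2k$ active coordinates of $H$.

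If $c$ is one of the $n - 2k$ fixed coordinates of $H$, then either the fixed value of $H$ at $c$ equals $\varepsilon$, in which case $H \subseteq S$ and $H \cap S = H$ is a copy of $(P_3)^k$, or it does not, in which case $H \cap S = \emptyset$. If instead $c$ is an active coordinate, say $c \in \{i_m, j_m\}$, the key observation is that restricting a single $P_3$ in one of its two coordinates to a fixed value leaves either its centre together with one neighbour (an edge $P_2$) or a single endpoint (one vertex $P_1$): fixing the centre's value in that coordinate keeps two vertices, while fixing the other value keeps one. Since $H$ is a Cartesian product and intersecting with $S$ affects only the $m$-th factor, we obtain $H \cap S = P_1 \times (P_3)^{k-1}$ or $H \cap S = P_2 \times (P_3)^{k-1}$. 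As $P_1 \times G = G$ for any graph $G$, the first option is a copy of $(P_3)^{k-1}$ and the second a copy of $P_2 \times (P_3)^{k-1}$, completing the case analysis and yielding exactly the four graphs in the statement.

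The only real content is the structural description of copies of $(P_3)^k$ and the elementary check that the restriction of a single $P_3$ to a coordinate hyperplane is either $P_1$ or $P_2$; once these are in place the result is a direct case split, so I do not anticipate a substantial obstacle beyond presenting the coordinate bookkeeping cleanly.
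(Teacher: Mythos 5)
Your argument is correct, but it takes a genuinely different route from the paper's. You classify the copies of $(P_3)^k$ explicitly in coordinates (a product of $k$ L-shaped $P_3$'s living on disjoint coordinate pairs, constant elsewhere) and then read off the intersection with $S$ factor by factor. The paper instead proceeds by induction on $k$ and never touches coordinates: it uses only the single geometric fact that every vertex of $Q_n$ lying in $S$ has exactly one neighbour outside $S$ (so, for instance, if the centre of a $P_3$ lies in $S$ then at least one of its two path-neighbours must also lie in $S$). Your route is shorter and more transparent once the structural description of a copy is in hand; the paper's route buys independence from that description.

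The one point you should make explicit is the structural claim itself. It is essentially definitional if ``copy'' is taken in the formal sense of \Cref{sec:one-mod-l} (a coordinate permutation of $X \times \{y\}$, possibly composed with coordinate flips), which is how you read it. But \Cref{thm:missing-vs} concerns arbitrary $(P_3)^3$-packings, and the natural reading of ``copy of $H$ in $Q_n$'' there is any subgraph of $Q_n$ isomorphic to $H$. For that broader notion your description is still true, but it does not follow from ``$P_3$ embeds in $Q_2$ as an L-shape'' alone: one needs to argue, say, that every $4$-cycle in $Q_n$ spans a $2$-dimensional subcube, and then propagate this rigidity across the grid $(P_3)^k$ to conclude that all row-direction edges of a given factor flip coordinates from one fixed pair, all column-direction edges from another, and that these pairs are disjoint. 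This is a short argument, but without it your opening paragraph is an assertion rather than a deduction, whereas the paper's inductive proof sidesteps the question entirely.
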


	\begin{proof}
		Let $S$ be the vertex set of a subcube of $Q_n$ of co-dimension $1$.
		Write $H = H' \times P_3$, where every $H'$ is a copy of
		$(P_3)^{k-1}$, and denote $H_i = H' \times \{i \}$ (where $V(P_3) = 
		\{1,2,3\}$. We prove the
		statement by induction on $k$.

		Let $k = 1$, then each $H_i$ is a single vertex.  Without loss of
		generality, $H_2$ is in $S$ (otherwise consider the complement of
		$S$). But then at least one of $H_1$ and $H_3$ also are in $S$
		(because every vertex in $S$ has exactly one neighbour outside of
		$S$). So, without loss of generality, $H_1$ is in $S$. It follows
		that $V(H) \cap S$ is either $H$ or $H_1 \times H_2$, as claimed.

		Now suppose that $k \ge 2$.  Then by induction, and without loss of
		generality, the intersection of $S$ with $H_1$ is either $H_1$ or a
		copy of $P_2 \times (P_3)^{k-1}$.

		Suppose that the first case holds, i.e.~the intersection of $S$ with
		$H_1$ is $H_1$. Then, if any vertex in $H_2$ is in $S$, all vertices
		of $H_1$ are in $S$ (since every vertex in $S$ has exactly one
		neighbour outside of $S$). In other words, $H_2$ is either contained
		in $S$ or it is contained in $\bar{S}$, the complement of $S$.  If
		the former holds, then, similarly, $H_2$ is contained in either $S$
		or $\bar{S}$, and if the latter holds then $H_2$ is contained in
		$\bar{S}$ (since every vertex in $H_2$ is in $\bar{S}$ and has a
		neighbour in $S \cap H_1$).  It follows that the intersection of $S$
		with $H$ in this case is $H_1$, $H_1 \times H_2$, or $H$, as
		required.

		Now suppose that the second case holds, i.e.~the intersection of
		$H_1$ with $S$ is a copy of $P_2 \times (P_3)^{k-1}$. Then we may
		write $H_1 = H'' \times P_3$ where $H''$ is a copy of
		$(P_3)^{k-2}$, and $H'' \times \{1,2\}$ (where $V(P_3) = \{1,2,3\}$)
		is the intersection of $H_1$ with $S$.
		Denote $H_{i,j} = H'' \times \{i\} \times \{j\}$.
		So $H_{1,1}$ and $H_{1,2}$ are in $S$ and $H_{1,3}$ is in $\bar{S}$.
		It follows that $H_{2,2}$ is in $S$
		(otherwise some vertex in $H_{1,2}$ would have two neighbours in
		$\bar{S}$); $H_{2,1}$ is in $S$ (otherwise a vertex of $\bar{S} \cap
		H_{2,1}$ would have two neighbours in $S$); and $H_{2,3}$ is in
		$\bar{S}$.  Similarly, $H_{3,1}$ and $H_{3,2}$ are contained in $S$
		and $H_{3,3}$ is in $\bar{S}$.  It follows that the intersection of
		$H$ with $S$ is a copy of $P_2 \times (P_3)^{k-1}$.
	\end{proof}

	We are now ready for the proof of \Cref{thm:missing-vs}
		
	\begin{proof}[ of \Cref{thm:missing-vs}]
		Let $\MH$ be a collection of pairwise disjoint copies of $(P_3)^3$ in
		$Q_n$ and let $S$ be a subcube of co-dimension $2$ in $Q_n$, and let
		$S'$ be a subcube of co-dimension $1$ in $Q_n$ that contains $S$.  Let
		$H \in \MH$.  Then, by \Cref{prop:intersect-codim-one}, the
		intersection of $H$ with $S'$ is either the
		empty set or it is the disjoint union of up to three copies of
		$(P_3)^2$.  It follows from
		\Cref{prop:intersect-codim-one} that the intersection of $H$ with $S$
		is the disjoint union of copies of $P_3$.  In particular, since $3$
		does not divide the order of $S$, at least one vertex in $S$ is not
		covered by $\MH$.

		Let $\P$ be the collection of subsets of $[n]$ that correspond to
		vertices of $Q_n$ that are not covered by $\MH$ (where we consider the
		usual map between $Q_n$ and $\P([n])$ that sends a vertex $u$ in $Q_n$ to
		the set of elements in $[n]$ whose coordinates in $u$ is $1$).
		We claim that the collection $\P$ is a \emph{separating family} for
		$[n]$, namely, for every distinct elements $i$ and $j$ in $[n]$,
		there is a set $A \in \P$ that contains $i$ but not $j$.
		Indeed, given distinct $i$ and $j$ in $[n]$, let $S$ be the
		subcube of co-dimension $2$ of vertices whose $i$-th coordinate is $1$
		and whose $j$-th coordinate is $0$. Then $S$ contains a vertex which
		is uncovered by $\MH$. This vertex corresponds to a set in $\P$ that
		contains $i$ but not $j$.
		It is a well known fact that a family that separates $[n]$ has size
		at least $\log n$. It follows $\P$ has size at least $\log n$,
		implying that at least $\log n$ vertices of $Q_n$ are not covered by
		$\MH$.
	\end{proof}

	We remark that by considering $(P_3)^{2k+1}$ packings of $Q_n$, the
	number of missing vertices can be shown to be at least $(1 +
	o(1)) k \log n$ (since the subsets corresponding to the missing vertices
	form a separating system for $[n]^{(k)}$). 
	
\section{Concluding remarks} \label{sec:conclusion}

	We showed that if $H$ is an induced subgraph of $Q_k$ then there exists a
	packing of $Q_n$ into induced copies of $H$, which misses at most
	$O(n^c)$, for $c = c(H)$. On the other hand, we showed that the error
	term cannot be replaced by anything smaller than $\log n$ (or, as we
	remarked in \Cref{sec:missing-vs} by $c \log n$ for any $c$).
	It would be very interesting to close the gap between the two bounds.

	We believe that the upper bound, of $O(n^c)$ is closer to the truth,
	i.e., we believe that there exist graphs $H$ for which at least
	$\Omega(n^c)$ vertices remain uncovered in any $H$-packing of $Q_n$.
	More specifically, it seems plausible to believe that every $(P_3)^k$
	packing of $Q_n$ leaves at least $\Omega(n^{k-1})$ vertices uncovered.
	We thus state the following question.

	\begin{qn}
		Is there a $(P_3)^k$ packing of $Q_n$ for which the number of
		uncovered points is at most $o(n^{k-1})$?
	\end{qn}
	
	In this paper we are interested in $H$-packings of $Q_n$, which can be
	viewed as $(P_2)^n$. It would be interesting to consider the more general
	setting of $H$-packings of $G^n$. We mention a conjecture of Gruslys
	\cite{Gruslys16III}.

	\begin{conj} [Gruslys \cite{Gruslys16III}]
		Let $G$ be a finite vertex-transitive graph, and let $H$ be an
		induced subgraph of $G$.
		Suppose further that $|H|$ divides $|G|$. Then for some $n$ there is
		a perfect $H$-packing of $G^n$.
	\end{conj}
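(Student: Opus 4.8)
The plan is to deduce the conjecture from the two-partition criterion of Gruslys, Leader and Tomon (\Cref{thm:general-tiling}), in direct analogy with the proof of \Cref{thm:tiling-powers-paths}. Take $A = V(G)$ and let $\F$ be the family consisting of the vertex sets of all induced copies of $H$ inside $G$, that is, the images $\phi(V(H))$ over $\phi \in \operatorname{Aut}(G)$. Any copy of a member of $\F$ in $G^n$ — obtained by fixing all but one block of coordinates and permuting coordinates — is again an induced copy of $H$ in $G^n$, since fixing coordinates preserves induced subgraphs and coordinate permutations are automorphisms of $G^n$. Hence a perfect partition of $A^n$ into copies of members of $\F$ is exactly a perfect $H$-packing of $G^n$, and it suffices to produce, for one common modulus $l$, both an $l$-partition and a $(1 \bmod l)$-partition of some power $G^m$ (which is again vertex-transitive) into induced copies of $H$.

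The $l$-partition is the easy half, and is handled by the averaging argument of \Cref{obs:l-tiling} with $\operatorname{Aut}(G)$ in place of the group of shifts. Fixing one induced copy $X$ of $H$ and forming the multiset $\{\phi(X) : \phi \in \operatorname{Aut}(G)\}$, vertex-transitivity forces every vertex of $G$ to lie in the same number of these sets; by the orbit--stabiliser theorem this number equals $l_0 := |\operatorname{Aut}(G)|\,|H|/|G|$, an integer. Taking products yields an $l_0$-partition of every $G^m$, and repeating it $c$ times replaces $l_0$ by $c l_0$, so we have the freedom to match whatever modulus is forced by the harder half.

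The genuine content, and the step I expect to be the obstacle, is the $(1 \bmod l)$-partition, the analogue of \Cref{thm:one-mod-l-tiling}. Note first that vertex-transitivity already places the all-ones function $\mathbf 1$ in the \emph{rational} span of the characteristic vectors $\chi_C \in \mathbb{Z}^{V(G^m)}$ of induced copies $C$ of $H$: a uniform fractional weight on all copies covers every vertex equally, so a suitable scaling gives $\mathbf 1$. The difficulty is thus entirely one of integrality, and here the hypothesis $|H| \mid |G|$ should enter as the consistency condition — it is precisely what guarantees $|H| \mid |G|^n$, the necessary arithmetic requirement for any perfect packing. Since the $l_0$-partition already expresses $l_0 \mathbf 1$ as a nonnegative integer combination of the $\chi_C$, it would suffice to show that $\mathbf 1$ lies in their $\mathbb{Z}/l$-span for a suitable multiple $l$ of $l_0$; one can then add copies of the $l$-partition to clear negative multiplicities, a routine correction, and obtain an honest $(1 \bmod l)$-partition.

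The hard part will be establishing this integral statement without the layered coordinate structure that drives \Cref{thm:one-mod-l-tiling}. In the path and hypercube settings the $(1 \bmod l)$-partition is built by induction on the dimension: one peels off the last coordinate, applies the inductive hypothesis in the lower-dimensional factor, and glues consecutive layers using that the remaining factor is a path or an edge, so a layer can be shifted by one step. A general vertex-transitive $G$ offers no canonical layering to induct on. A natural substitute, at least when $G$ is a Cayley graph, would be to use translations by a cyclically structured generating set to imitate ``shifting a layer'', or, more abstractly, to analyse the $\mathbb{Z}/l$-module generated by the $\chi_C$ through the transitive action of $\operatorname{Aut}(G^m)$. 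Carrying this out for arbitrary vertex-transitive $G$ is exactly the point at which the argument stalls, which is why the conjecture is still open.
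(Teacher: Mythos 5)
This statement is a \emph{conjecture} (due to Gruslys \cite{Gruslys16III}): the paper offers no proof of it, and it is presented in \Cref{sec:conclusion} precisely as an open problem. So there is no paper proof to compare against, and your proposal --- to its credit, explicitly --- is not a proof either. What you do establish is correct: the reduction to \Cref{thm:general-tiling} is sound (copies of members of $\F$ in $G^n$, in the sense of fixed coordinates plus coordinate permutations, are indeed induced copies of $H$, since a fiber of $G^n$ with all but one block of coordinates fixed is an induced $G^m$); and the averaging over $\operatorname{Aut}(G)$ correctly generalises \Cref{obs:l-tiling}, giving an $l_0$-partition with $l_0 = |H|\,|\operatorname{Aut}(G)|/|G| = |H|\cdot|\mathrm{Stab}(v)|$, an integer by orbit--stabiliser. (One small point to tidy: \Cref{thm:general-tiling} needs a \emph{common} modulus $l$ for both partitions, and your freedom to pass from $l_0$ to $c\,l_0$ only helps if the modulus produced by the hard half can be arranged to be a multiple of $l_0$, or conversely; in the paper both halves are engineered to the same modulus $l = |H|$ from the start.)

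The genuine gap is exactly where you say it is, and it is the entire content of the conjecture: constructing a $(1 \bmod l)$-partition of some power $G^m$ into induced copies of $H$. The proof of \Cref{thm:one-mod-l-tiling} is not an abstract integrality argument but a concrete induction that exploits the linear layer structure of $(P_{2l})^k$: one peels off the last coordinate, covers a single layer $(P_{2l})^{k-1} \times \{p\}$ $(1 \bmod l)$ times using copies of $H_-$ extended to copies of $H$, reflects to cancel the spill-over into the adjacent layer, and telescopes the resulting weights $w_p$ along the path. Every step uses that the last factor is a path (so layers have at most two neighbours and can be ``shifted by one''), and that $H$ splits as $H_-, H_+$ across a coordinate direction of $Q_k$. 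A general vertex-transitive $G$ has no such layering, no canonical splitting of $H$, and your observation that $\mathbf 1$ lies in the \emph{rational} span of the vectors $\chi_C$ is much weaker than the needed mod-$l$ integral statement --- indeed the rational statement holds with no divisibility hypothesis at all, whereas the conjecture is known to fail without vertex-transitivity (Proposition 9 in \cite{Gruslys16III}), so any correct argument must use the structure more seriously than your linear-algebraic framing does. Your diagnosis that this is where the argument stalls is accurate; it stalls because this is an open problem, not a routine extension of the paper's method.
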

	We note that the conjecture does not hold if we drop the
	vertex-transitivity (see Proposition 9 in \cite{Gruslys16III}).

	In \Cref{sec:intro}, we mentioned a recent result of Tomon
	\cite{Tomon16II}
	who proved that if $P$ is a poset with a minimum and a maximum, then the
	Boolean lattice $2^{[n]}$ can be partitioned into copies of $P$ and a
	remainder of at most $c$ elements, where $c = c(P)$.
	It would be interesting to generalise his result to all posets $P$,
	dropping the requirement of the existence of a minimum and a maximum.
	This would resolve a conjecture of Gruslys, Leader and Tomon \cite{Gruslys16II}.

	\begin{conj} [Gruslys \cite{Gruslys16II}]
		Let $P$ be a poset. Then the Boolean lattice $2^{[n]}$ can be
		partitioned into copies of $P$ and a remainder of at most $c = c(P)$
		elements.
	\end{conj}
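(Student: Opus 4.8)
The conjecture extends the theorem of Tomon \cite{Tomon16II}, which settles the case of posets possessing a minimum and a maximum, to arbitrary posets; the natural plan is therefore to reduce the general case to Tomon's result, or to reprove it by the product-space method while dispensing with the extremal hypothesis. I shall describe both routes, since they share the same genuine difficulty.

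The most direct reduction is to pad $P$ into the poset $\hat{P}$ obtained by adjoining a new minimum $\hat{0}$ below every element and a new maximum $\hat{1}$ above every element. Then $\hat{P}$ has a minimum and a maximum, so by Tomon's theorem $2^{[n]}$ can be almost-partitioned into induced copies of $\hat{P}$ with a bounded remainder. Deleting from each copy the two points playing the roles of $\hat{0}$ and $\hat{1}$ leaves an induced copy of $P$, so one obtains a $P$-packing of $2^{[n]}$; the trouble is that the deleted ``padding'' points number roughly $2 \cdot 2^{n} / (|P| + 2)$, far more than a constant, so this naive reduction fails. The heart of the matter is to cover these padding points efficiently, and I see no way to do so without controlling, from the outset, where the extremal points of the $\hat{P}$-copies are placed.

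This suggests instead following the product-space strategy of the present paper. By the tiling theorem of Gruslys, Leader and Tomon (\Cref{thm:general-tiling}), applied in the poset setting, it suffices to produce, for some $l$, an $l$-partition and a $(1 \bmod l)$-partition of a fixed grid (a product of chains) into induced copies of $P$: this yields a perfect packing of a power of that grid, which one then transfers to $2^{[n]}$ with only a bounded remainder exactly as in Tomon's argument and in \Cref{obs:tiling-odd-paths,cor:tiling-even-paths}. The $l$-partition should be easy, as in \Cref{obs:l-tiling}: partition the grid into induced Boolean lattices $2^{[m]}$ and average a single copy of $P$ over the coordinate-shifts of each $2^{[m]}$. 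The entire difficulty is the $(1 \bmod l)$-partition, and this is where the main obstacle lies. In \Cref{thm:one-mod-l-tiling} such a partition is built by induction on the dimension, reconstructing each copy from its two boundary slices $H_{-}$ and $H_{+}$; the analogous argument for posets uses the minimum and maximum of $P$ to anchor these slices in the extreme layers so that they recombine into genuine copies of $P$.

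The main obstacle, then, is to carry out this inductive construction for a poset $P$ that has neither a minimum nor a maximum, where the boundary slices need not extend to full copies. My plan would be to strengthen the inductive hypothesis so that it simultaneously controls the covering defect in the bottom layer and in the top layer --- for instance by packing $P$ in tandem with its order-dual, or by tracking a pair of residues rather than a single one --- so that the two boundary errors can be made to cancel modulo $l$. I expect this simultaneous control of both extremes to be the crux of any proof; granting it, the remaining steps are routine adaptations of the arguments already developed here and in \cite{Tomon16II}.
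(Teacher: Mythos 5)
This statement is a \emph{conjecture}: the paper offers no proof of it, and it is presented as an open problem (a strengthening of Tomon's theorem \cite{Tomon16II} in which the hypothesis that $P$ has a minimum and a maximum is dropped). Your proposal, by its own admission, does not close it either. Both routes you sketch stop exactly at the point where the new difficulty begins. The padding reduction you correctly discard. The product-space route has a more structural problem than you acknowledge: \Cref{thm:general-tiling} produces a \emph{perfect} partition of a power of the base set, whereas for a poset $P$ with no minimum a perfect partition of $2^{[n]}$ (or of any poset with a global minimum) into induced copies of $P$ is outright impossible --- the element $\emptyset$ lies below every other element, so whichever induced copy contained it would force $P$ to have a minimum. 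So the conjecture is genuinely about controlling a nonzero remainder, not about upgrading an $l$-partition and a $(1\bmod l)$-partition to a perfect one; the machinery of \Cref{obs:l-tiling} and \Cref{thm:one-mod-l-tiling} is built for the divisible, extremum-possessing situation and does not transfer. Your final paragraph (``strengthen the inductive hypothesis \dots so that the two boundary errors cancel modulo $l$'') names the crux but supplies no construction, and ``granting it, the rest is routine'' is precisely the step that is missing.

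In short: there is no argument here to compare against, because the paper proves nothing for this statement and neither do you. What you have written is a reasonable survey of why the known techniques fail, which is worthwhile as commentary on the conjecture, but it should not be presented as a proof proposal. If you want to make progress, the first concrete sub-problem I would suggest is the smallest nontrivial instance: an antichain $P$ on two elements already has neither a minimum nor a maximum, and even there the bounded-remainder claim requires an argument that is not a formal consequence of \cite{Tomon16II} or of the partition machinery in this paper.
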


	Finally, we mention a question about Hamilton paths of $Q_n$.
	Recall that in order to prove that $Q_n$ can be almost partitioned into
	induced copies of $(P_l)^t$, we first proved this statement without
	requiring the copies to be induced. That followed easily from the fact
	that $Q_n$ is Hamiltonian. We then used such a partition to obtain a
	partition of $Q_n$ into induced copies of $(P_l)^t$. A more direct
	approach could be to find a Hamilton path $P$ in $Q_n$ for which every
	$l$ consecutive vertices induced a $P_l$. We were unable to determine if
	such a Hamilton path exists.
	We thus conclude the paper with the following question.

	\begin{qn}
		Let $l$ be integer. Is it true that for sufficiently large $n$, there
		is a Hamilton path $Q_n$ for which every $l$ consecutive vertices
		induce a $P_l$?
	\end{qn}

    \bibliography{almost-tiling}
    \bibliographystyle{amsplain}
\end{document}